\newtheorem{theorem}{Theorem}[section]
\newtheorem{lemma}[theorem]{Lemma}
\newtheorem{corollary}[theorem]{Corollary}
\newtheorem{proposition}[theorem]{Proposition}
\theoremstyle{definition}
\newtheorem{definition}[theorem]{Definition}
\newtheorem{example}[theorem]{Example}
\theoremstyle{remark}
\newtheorem{remark}[theorem]{Remark}
\numberwithin{equation}{section}
\newcommand{\mc}{\mathcal}
\newcommand{\C}{{\mathbb C}}
\newcommand{\R}{{\mathbb R}}
\newcommand{\Z}{{\mathbb Z}}
\newcommand{\N}{{\mathbb N}}
\newcommand{\F}{{\mathbb F}}
\newcommand{\CT}{{\mathcal T}}
\newcommand{\Rad}{{\rm{Rad}}}
\newcommand{\id}{{\rm{id}}}
\newcommand{\cC}{\mc C}
\newcommand{\cL}{\mc L}
\newcommand{\be}{\begin{equation}}
\newcommand{\ee}{\end{equation}}
\newcommand{\cR}{\mc R}
\newcommand{\bT}{\mathbf T}
\newcommand{\bw}{\mathbf w}
\newcommand{\bl}{\mathbf L}
\newcommand{\inv}{^{-1}}
\newcommand{\ol}{\overline}
\newcommand{\lr}{\longrightarrow}
\newcommand{\wt}{\widetilde}
\newcommand{\U}{{\rm{U}}}
\newcommand{\Hom}{{\rm{Hom}}}
\newcommand{\TL}{{\rm{TL}}}
\newcommand{\tr}{{\rm{tr}}}
\newcommand{\fso}{{\mathfrak {so}}}
\newcommand{\ot}{\otimes}
\newcommand{\SO}{{\rm SO}}
\newcommand{\la}{{\langle}}
\newcommand{\ra}{{\rangle}}
\begin{document}

\normalfont

\title[Temperley-Lieb and Virosoro]{The Jones quotients of the Temperley-Lieb algebras.}


\author{K. Iohara, G.I. Lehrer and R.B. Zhang}
\thanks{The present work was initiated during an Australian Research Council funded
visit of K.~I. to the University of Sydney in October-November 2016. He 
gratefully acknowledges the support and hospitality extended to him.}
\address{Univ Lyon, Universit\'{e} Claude Bernard Lyon 1, CNRS UMR 5208, Institut Camille Jordan, 
43 Boulevard du 11 Novembre 1918, F-69622 Villeurbanne cedex, France}
\email{iohara@math.univ-lyon1.fr}
\address{School of Mathematics and Statistics,
University of Sydney, N.S.W. 2006, Australia}
\email{gustav.lehrer@sydney.edu.au, ruibin.zhang@sydney.edu.au}
\begin{abstract} When the parameter $q$ is a root of unity, the Temperley-Lieb algebra $\TL_n(q)$ is non-semisimple
for almost all $n$. Jones showed that there is a canonical symmetric bilinear form on $\TL_n(q)$, whose radical $R_n(q)$ is generated by
a certain idempotent $E_\ell\in\TL_{\ell-1}(q)\subseteq\TL_n(q)$, which is now referred to as 
the Jones-Wenzl idempotent, for which an explicit formula was subsequently given by Graham and Lehrer.
In this work, we study the quotients $Q_n(\ell):=\TL_n(q)/R_n(q)$, where $|q^2|=\ell$, which are precisely the algebras generated by
Jones' projections. We give the dimensions of their simple modules,
as well as $\dim(Q_n(\ell))$; en route we give generating functions and recursions for the dimensions of cell modules and associated combinatorics.
When the order $|q^2|=4$, we obtain an isomorphism of $Q_n(\ell)$ with the even part of the Clifford algebra, well known to physicists
through the Ising model. When $|q^2|=5$, we obtain a sequence of algebras whose dimensions are the odd-indexed Fibonacci numbers.
The general case is described explicitly.
\end{abstract}
\subjclass[2010]{81R15, 46L37, 16W22}
\maketitle
\section{Introduction}
\subsection{Preamble} In a celebrated series of works, Jones \cite{J2,J1,J3} showed that a certain sequence of von Neumann 
algebras $A_n$, $n=1,2,3,\dots$ generated by idempotents $\{e_1,\dots,e_{n-1}\}$ in a type $\text{II}_1$ factor
is closely related to the Temperley-Lieb algebras. His algebras depend on a parameter, denoted $q^2\in\C$ in the present work,
whose value in Jones' work was either in $\R_{\geq 0}$, or equal to $\exp(\pm \frac{2\pi i}{\ell})$ for $\ell=3,4,5,\dots$. 
This constraint arose because of the context of the $e_i$ as projections in a type $\text{II}_1$ factor (see \cite{W}),
which implies that a certain trace form on the algebra is positive definite.
When $q^2\in\R$, the projections $e_i$ 
generate precisely the Temperley-Lieb algebra (see \ref{def:tln} below), and the positive definite nature of
the trace form is automatic. 

However the requirement that Jones' trace on $A_n$ be non-degenerate
implies that for $q^2=\exp(\pm \frac{2\pi i}{\ell})$, the relevant algebra $A_n$ is a proper quotient of
the Temperley-Lieb algebra $\TL_n(q)$ (Definition \ref{def:tln} below) by 
the radical of the Jones form.

This radical of the trace form, regarded as a form on $\TL_n(q)$,
is known \cite{J1,GLA}, \cite[Thm. 2.1]{JR} to be generated by a single idempotent in $\TL_{\ell-1}(\exp(\pm \frac{2\pi i}{\ell}))$, 
known as the Jones-Wenzl idempotent. The main purpose of this work is to completely determine the dimension and representations 
of the quotient of $\TL_{\ell-1}(\exp(\pm \frac{2\pi i}{\ell}))$ by this radical, 
which we denote in this work by $Q_n(\ell)$ and which we refer to as the ``Jones quotient''. 
When $\ell=4$ we show below (Theorem \ref{thm:qcl})
 that, as is well known in the physics
literature \cite{KS,MLW}, $Q_n(4)$ is isomorphic to the even part of the standard Clifford algebra.  
Our work could therefore be interpreted as the study
of certain generalisations of the Clifford algebras. Other such generalisations exist in the literature (cf. \cite{MLW,M1}), 
but are quite different from the 
Jones projection algebras $Q_n(\ell)$.

Our interest in this subject stemmed originally from a desire to understand how the Virasoro algebra arises as a limit of the Temperley-Lieb
algebras, or of their semisimple quotients, which is a recurring theme in conformal field theory (see, for example, \cite{KS} 
and \S\ref{ss:spec} below). We hope 
to return to this in the future. 

\subsection{The Temperley-Lieb algebras}
In this work, all algebras will be over $\C$. Much of the theory we develop applies over more general 
domains, but since we will be concerned here with connections to the theory of
operator algebras and mathematical physics, we limit our discussion to $\C$-algebras. For $n\in\N$, the Temperley-Lieb algebra
$\TL_n(q)$ is defined as follows.

\begin{definition}\label{def:tln}
Let $q\in\C$. $\TL_n=\TL_n(q)$ is the associative $\C$-algebra with generators $f_1,f_2,\dots,f_{n-1}$ and relations
\be\label{eq:reltl}
\begin{aligned}
f_i^2=&-(q+q\inv)f_i\text{ for all }i\\
f_if_{i\pm 1}f_i=&f_i\text{ for all }i\\
f_if_j=&f_jf_i\text{ if }|i-j|\geq 2.\\
\end{aligned}
 \ee 
\end{definition}

In his seminal work \cite{J2} on subfactors of a factor, Jones showed that certain projectors 
$\{e_1,\dots,e_{n-1}\}$ ($n=1,2,,3,\dots$) in a von Neumann algebra satisfy the Temperley-Lieb-like
relations, a fact that led to the definition of the ``Jones polynomial'' of an oriented link. In the notation of 
\cite[p. 104, (I)--(VI)]{J3}, Jones showed that if $f_i=(q+q\inv)e_i$, then the $f_i$ satisfy the 
relations \eqref{eq:reltl}, where Jones' parameter $t$ is replaced by $q^2$.
If $q^2\neq -1$, Jones' form on $\TL_n(q)$ is defined as
 the unique (invariant) trace $\tr_n=\tr$ on $\TL_n(q)$ which satisfies
 \be\label{eq:jf}
 \tr(1)=1\text {   and   } \tr(xf_i)=-(q+q\inv)\inv\tr(x)\text{   for   }x\in\TL_{i-1}\subset\TL_i\subseteq\TL_n
 \ee
 for $1\leq i\leq n-1$.
 
 This trace on $\TL_n(q)$ is non-degenerate if and only if $q^2$ is not a root of unity,
 or, if $|q^2|=\ell$, $n\leq \ell-2$ (\cite[(3.8)]{GLA}. Thus the discrete set of values of $q^2$ for which
 the Jones' sequence $(A_n)$ of algebras is infinite coincides precisely with the set of values of $q^2$
 for which the trace form above on $\TL_n(q)$ is degenerate.  
 We are concerned in this work with the structure and representations of the quotient of $\TL_n(q)$ 
 by the radical of the Jones trace form.

 \subsection{Cell modules and forms}\label{ss:cmod} Let us fix $n$ and consider the representation theory of $\TL_n$.
 By \cite{GL} or \cite{GLA}, $\TL_n$ has cell modules $W_t:=W_t(n)$ whose basis is the set of monic
 Temperly-Lieb morphisms from $t$ to $n$, where $t\in\CT(n)$, and 
 $\CT(n)=\{t\in\Z\mid 0\leq t\leq n\text{ and }t+n\in2\Z\}$.
 
 Now $W_t$ has an invariant form $(\;,\;)$ which may be described as follows. For monic diagrams
 $D_1,D_2:t\to n$, we form the diagram $D_1^*D_2:t\to t$. If $D_1^*D_2$ is monic (i.e. a multiple of $\id_t$), then we write
 $D_1^*D_2=(D_1,D_2)\id_t$; otherwise we say $(D_1,D_2)=0$. 
 Here $D^*$ denotes the diagram obtained from $D$ by reflection in a horizontal, extended to $W_t$ by linearity.

 \section{Semisimplicity and non-degeneracy.} 
 Clearly, if the trace \eqref{eq:jf} is non-degenerate, the algebra $\TL_n$ is semisimple. The converse is true
except for one single case  (see \cite[Rem. 3.8, p.204]{GLA}). It follows from \cite[Cor. (3.6)]{GLA}
that if $|q^2|=\ell$, $\TL_n$ is non-semisimple if and only if $n\geq \ell$. 
Moreover we have very precise information concerning the radical of 
 the invariant trace form (which we henceforth call the Jones form).

\subsection{Radical of the trace form} The  radical of the trace form above is given by the following result
(see \cite[\S 3]{GLA}, \cite{J1}).
\begin{proposition}\label{prop:rad}
If $q$ is not a root of unity then $\tr$ is non-degenerate and $\TL_n$ is semisimple for all $n$.

Suppose the order of $q^2$ is $\ell$. Then there is a unique idempotent $E_{\ell-1}\in\TL_{\ell-1}$ (the Jones-Wenzl idempotent)
such that $f_iE_{\ell-1}=E_{\ell-1}f_i=0$ for $1\leq i\leq \ell-2$.
Moreover for $n\geq \ell$ the radical of $\tr_n$ is generated as ideal of $\TL_n$ by $E_{\ell-1}$.
\end{proposition}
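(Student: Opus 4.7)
The plan is to exploit the cellular structure on $\TL_n$ from \cite{GL,GLA} together with the explicit formulas for the Gram determinants of the invariant forms $(\,,\,)_t$ on the cell modules $W_t(n)$ in terms of quantum integers $[k]_q:=(q^k-q^{-k})/(q-q\inv)$. The Jones trace $\tr_n$, by virtue of its defining recursion \eqnref{eq:jf} and the cellular filtration, has Gram matrix in a cellular basis which block-decomposes into rescalings of the individual cell Gram matrices, the rescalings being quantum-integer ratios dictated by \eqnref{eq:jf}.

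\textbf{Part (1).} For $q$ not a root of unity, every $[k]_q\neq 0$, so the Gram determinant of each $(\,,\,)_t$ (a product of such quantum integers) is non-zero, and cellular theory gives semisimplicity of $\TL_n$ with the $W_t(n)$ as a complete set of simples. The rescaling factors converting the cell Gram matrices into the trace Gram matrix are themselves non-zero ratios of quantum integers, so $\tr_n$ is non-degenerate on $\TL_n$.

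\textbf{Part (2).} Construct $E_{\ell-1}$ by Wenzl's recursion, which at step $k$ requires division by $[k]_q$. For $|q^2|=\ell$ the quantum integers $[1]_q,\dots,[\ell-1]_q$ are all non-zero (the first zero being $[\ell]_q$), so the recursion terminates successfully and produces $E_{\ell-1}\in\TL_{\ell-1}$ satisfying $f_iE_{\ell-1}=E_{\ell-1}f_i=0$ for $1\leq i\leq\ell-2$. Since $\ell-1<\ell$, $\TL_{\ell-1}$ is itself semisimple (\cite[Cor.\ 3.6]{GLA}), so an element is determined by its action on the cell modules $W_t(\ell-1)$; any idempotent $E$ killed on both sides by $f_1,\dots,f_{\ell-2}$ must act as the identity on the one-dimensional top cell $W_{\ell-1}(\ell-1)$ and as zero on the remaining cells (which are generated by vectors of the form $f_{i_1}\cdots f_{i_r}v$), uniquely pinning $E$ down to $E_{\ell-1}$.

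\textbf{Part (3).} For the inclusion $\langle E_{\ell-1}\rangle\subseteq\Rad(\tr_n)$, a short induction on $\ell$ using \eqnref{eq:jf} together with $f_iE_{\ell-1}=0$ expresses $\tr_n(xE_{\ell-1})$ as a multiple of $[\ell]_q=0$ for every $x\in\TL_n$, showing $E_{\ell-1}\in\Rad(\tr_n)$; two-sidedness then gives containment of the full ideal. The reverse inclusion is the main obstacle: one must verify that quotienting by $\langle E_{\ell-1}\rangle$ removes exactly the singular directions of $\tr_n$ and nothing more. The strategy is to use Graham--Lehrer's description (\cite[\S 3]{GLA}) of the radical of each cell form $(\,,\,)_t$ at a root of unity, according to which these radicals are generated, inside the cellular filtration, by images of $E_{\ell-1}$ under the natural embeddings $\TL_{\ell-1}\hookrightarrow\TL_n$. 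Combining this with the Gram decomposition indicated above shows that $\langle E_{\ell-1}\rangle$ exhausts $\Rad(\tr_n)$; the delicate step is tracking the interplay between the non-vanishing rescaling ratios from \eqnref{eq:jf} and the zeros among the cell Gram determinants, which is handled through the explicit quantum-integer formulas of \cite{GLA}.
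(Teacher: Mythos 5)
The paper does not actually prove Proposition~\ref{prop:rad}: it records it as a known fact and refers the reader to Graham--Lehrer \cite[\S 3]{GLA} and Jones \cite{J1} (with \cite{JR} also cited in the abstract). Your sketch is, in effect, a reconstruction of the argument from exactly those sources, so there is no divergence of method to report; what follows is a check of your reconstruction on its own terms.

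Parts (1) and (2) are essentially sound. Two small points on uniqueness in Part (2). First, the uniqueness claim as literally stated in the Proposition is false ($0$ is also an idempotent killed by the $f_i$'s); the intended statement, which your Wenzl construction produces, is that there is a unique \emph{nonzero} such idempotent, equivalently a unique one congruent to $1$ modulo the ideal generated by the $f_i$. Second, your reason that $E$ annihilates the lower cells --- ``which are generated by vectors of the form $f_{i_1}\cdots f_{i_r}v$'' --- is not quite a proof as written, since $\TL_{\ell-1}v$ also contains $cv$ for the constant terms $c$. The clean version: if $E$ acted nontrivially on some simple $W_t$ with $t<\ell-1$, pick $0\neq v\in EW_t$; then $f_iv=0$ for all $i$, so $\TL_{\ell-1}v=\C v$, forcing $W_t$ to be the one-dimensional augmentation module $W_{\ell-1}$, a contradiction. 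This is the same idea you had, just made airtight.

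Part (3) is the genuine content, and you rightly flag the reverse inclusion $\Rad(\tr_n)\subseteq\langle E_{\ell-1}\rangle$ as the hard step; but your treatment of it is still essentially a pointer to \cite{GLA} rather than an argument. The forward inclusion is fine: $E_{\ell-1}\in\Rad(\tr_n)$ follows from the Markov property of $\tr_n$ together with $f_iE_{\ell-1}=E_{\ell-1}f_i=0$, which collapses $\tr_n(xE_{\ell-1})$ to a scalar multiple of $\tr_{\ell-1}(E_{\ell-1})$, and this vanishes because it is proportional to $[\ell]_q$; and the radical of a trace form is automatically a two-sided ideal, so $\langle E_{\ell-1}\rangle\subseteq\Rad(\tr_n)$. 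For the reverse inclusion, what is actually needed (and what \cite[\S 3]{GLA} supplies via the explicit Gram-determinant formulas and the homomorphisms $\theta_t:W_{g(t)}\to W_t$) is a dimension count: one identifies $\dim(\TL_n/\langle E_{\ell-1}\rangle)$ with $\sum_{t\le\ell-2}\ell_t(n)^2$ and shows this equals $\dim(\TL_n/\Rad(\tr_n))$. Your sketch gestures at this but does not carry it out; since the paper itself defers to the same references, this is an acceptable level of detail for the present context, but be aware it is where the work lies.
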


\begin{remark}cf. \cite[Remark (3.8)]{GLA}
It follows from Proposition \ref{prop:rad} that the trace $\tr_n$ is non-degenerate if and only if 
$n\leq \ell-2$, where $\ell=|q^2|$. It follows that the case $n=\ell-1$ is uniquely characterised as 
the one where the form $\tr$ is degenerate, but $\TL_{\ell-1}$ is semisimple.
\end{remark}

The following formula for the idempotent $E_{\ell-1}$ was proved in \cite[Cor. 3.7]{GLA}.
To prepare for its statement, recall that if $F$ is a finite forest (i.e. a partially ordered set in which 
$x\leq a, x\leq b\implies a\leq b$ or $b\leq a$), then we define a Laurent polynomial

\be\label{eq:hf}
h_F(x)=\frac{[|F|]_x!}{\prod_{a\in F}[|F_{\leq a}|]_x},
\ee
where, for $m\in\N$, $[m]_x=\frac{x^m-x^{-m}}{x-x\inv}$ and $[m]_x!=[m]_x[m-1]_x\dots[2]_x[1]_x$.

\begin{theorem}\label{thm:jwi} For any Temperley-Lieb diagram $a:0\to 2n$ we have an associated forest $F_a$, which is simply the 
poset of arcs, ordered by their nesting. For any Temperley-Lieb diagram $D:t\to n$, one obtains a unique diagram $\ol D:0\to t+n$ by rotating 
the bottom line clockwise by $\pi$. With this notation, if $|q^2|=\ell$, we have
\be
E_{\ell-1}=\sum_D h_{F_{\ol D}}(q) D,
\ee
where the sum is over the diagrams from $\ell-1$ to $\ell-1$, i.e. over the diagram basis of $\TL_{\ell-1}$.
\end{theorem}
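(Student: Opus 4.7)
The plan is to identify $E:=\sum_D h_{F_{\ol D}}(q)D$ with the unique idempotent characterised by Proposition~\ref{prop:rad}. A preliminary simplification: for any $X=\sum_D c_DD\in\TL_{\ell-1}$ satisfying $f_iX=0$ for all $i$, every non-identity diagram $D$ is a product of the $f_j$'s, so $DX=0$, whence $X^2=c_{\id}X$. Thus it suffices to verify that (a) the coefficient of $\id$ in $E$ is $1$, and (b) $f_iE=0$ for $1\leq i\leq\ell-2$. The equation $Ef_i=0$ then follows from the symmetry $E^*=E$: $\ol{D^*}$ is the mirror image of $\ol D$, mirroring preserves the nesting of arcs, so $F_{\ol{D^*}}\cong F_{\ol D}$ as posets and hence $h_{F_{\ol{D^*}}}(q)=h_{F_{\ol D}}(q)$.

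Part (a) is immediate: $\ol{\id}$ is a rainbow of $\ell-1$ concentrically nested arcs, so $F_{\ol{\id}}$ is a chain with $|F_{\leq a_k}|=k$ for $k=1,\ldots,\ell-1$, and \eqref{eq:hf} gives $h_{F_{\ol{\id}}}(q)=[\ell-1]_q!/\prod_{k=1}^{\ell-1}[k]_q=1$. For (b), expand $f_iE=\sum_D h_{F_{\ol D}}(q)\,f_iD$: each $f_iD$ is a scalar multiple of a single diagram $D''$ whose top has a cap at positions $(i,i+1)$, with either $D=D''$ (contributing $-[2]_qD''$) or $D\neq D''$ (contributing $D''$ itself). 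A diagram-chasing argument parametrises the set of $D\neq D''$ with $f_iD=D''$ by the arcs $a\in F_{\ol{D''}}\setminus\{c\}$, where $c$ is the leaf of $F_{\ol{D''}}$ corresponding to the cap: $D_a$ is obtained from $D''$ by the unique planar swap of the endpoint pairings of $c$ and $a$ (three sub-cases, according to whether $a$ lies wholly to the left of $c$, wholly to the right of $c$, or encloses $c$). Collecting the coefficient of $D''$, part (b) reduces to the forest identity
\begin{equation}\label{eq:foridentity}
\sum_{a\in F\setminus\{c\}}h_{F_a}(q)\;=\;[2]_q\,h_F(q),
\end{equation}
to be verified at any $q$ with $|q^2|=\ell$, where $F=F_{\ol{D''}}$ has $\ell-1$ arcs, $c$ is a leaf, and $F_a$ is the swapped forest.

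The heart of the proof, and the main obstacle, is \eqref{eq:foridentity}. A direct calculation in $\TL_3$ with $\ell=4$ shows that this identity is \emph{false} as a Laurent polynomial identity in $q$; the two sides differ by a multiple of the cyclotomic factor $[\ell]_q$, which vanishes at the specialisation. To prove it I would compute each ratio $h_{F_a}(q)/h_F(q)$ explicitly as a product of $q$-integers depending only on the subtree sizes at $a$, at its ancestors, and at the new arcs introduced by the swap (the computation is markedly simplified by $|F_{\le c}|=1$, so the factor at $c$ contributes trivially), then sum over $a$ using standard $q$-integer identities together with $[\ell]_q=0$. I expect induction on the depth of $F$, stripping off either the outermost arcs or successively the ancestors of $c$, to reduce \eqref{eq:foridentity} to a short telescoping calculation.
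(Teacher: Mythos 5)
The paper does not actually prove Theorem~\ref{thm:jwi}; it cites \cite[Cor.~3.7]{GLA} for the formula. So there is no in-paper proof to compare against, and the question is whether your stand-alone argument is correct. The outer scaffolding is sound: reducing to the characterisation of $E_{\ell-1}$ in Proposition~\ref{prop:rad}, observing that $f_iX=0$ for all $i$ forces $X^2=c_{\id}X$ (every non-identity diagram is a word in the $f_j$), checking the coefficient of $\id$, and deducing $Ef_i=0$ from $f_iE=0$ via $E^*=E$ (the mirror symmetry of $\ol D\mapsto\ol{D^*}$ is a poset isomorphism, so the coefficients are preserved). That is all fine.

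There is, however, a concrete error in the parametrisation underlying your key step. You claim that the set of $D\neq D''$ with $f_iD=D''$ is in bijection with $F_{\ol{D''}}\setminus\{c\}$, with $D_a$ produced by ``the unique planar swap.'' This swap does not always exist, and the fibre can be strictly smaller than $\ell-2$. Take $\ell-1=3$, $i=1$, $D''=f_1f_2$. Then $\ol{D''}$ has arcs $(1,2),(3,6),(4,5)$, with $c=(1,2)$. Deleting $c$ and $a=(4,5)$ and re-pairing $\{1,2,4,5\}$ gives either $(1,4)(2,5)$ (crossing each other) or $(1,5)(2,4)$ (both crossing the retained arc $(3,6)$), so there is no planar swap. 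Correspondingly, the only $D\neq f_1f_2$ with $f_1D=f_1f_2$ is $D=f_2$, a fibre of size $1$, not $2$. A counting sanity check confirms the over-count globally: summing fibre sizes over all $D''$ with a cap at $(i,i+1)$ must give $C_{\ell-1}-C_{\ell-2}$ (the number of diagrams without that cap), whereas $(\ell-2)C_{\ell-2}$ --- your predicted total --- already exceeds this for $\ell-1=3$ ($3$ versus $4$). So the identity~\eqref{eq:foridentity} is not even well posed as written: for some $a\in F\setminus\{c\}$ the forest $F_a$ does not exist. The valid $a$ are those for which the positions strictly between $c$ and $a$ are saturated by the other arcs (for instance $(1,8)$ in $\ol{f_3}\subset\ol{\TL_4}$ fails), and that constraint changes the combinatorics.

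Independently of that, the centerpiece of the argument --- the (corrected) forest identity at $[\ell]_q=0$ --- is not proved: the proposal says ``I would compute \dots I expect induction \dots to reduce \dots to a short telescoping calculation.'' That is precisely the hard content, and sketching a hoped-for strategy does not close the gap. Both the incorrect parametrisation and the unverified identity need to be repaired before this constitutes a proof.
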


\begin{example}
If $\ell=4$, we may take $q=-\exp{\frac{\pi i}{4}}$, so that $q^2=i$ and the element $E_3\in\TL_3$ is easily shown to be equal to 
$$
E_3=1+f_1f_2+f_2f_1-\sqrt 2(f_1+f_2).
$$
Note that our defining parameter for $\TL_n$ in this case is $-(q+q\inv)=\sqrt 2$, and the above
element is the familiar one which occurs in the study of the two-dimensional Ising lattice model.
\end{example}

\subsection{The Jones quotient} We now wish to consider the quotient of $\TL_n$ by the ideal generated 
 by $E_{\ell-1}$.
\begin{definition}\label{def:q}
Assume that $|q^2|=\ell$ for a fixed integer $\ell\geq 3$. Let $J_\ell=\langle E_{\ell-1}\rangle$ be the ideal of 
$\TL_n(q)$ generated by the idempotent $E_{\ell-1}\in\TL_{\ell-1}(q)$, where $\TL_{\ell-1}(q)$ is thought of as a
subalgebra of $\TL_n(q)$ for $n\geq\ell-1$ in the obvious way.

The algebra $Q_n=Q_n(\ell)$ ($n=\ell-1,\ell,\ell+1,\dots$) is
defined by
\[
Q_n(\ell)=\frac{\TL_n}{J_\ell}.
\]
This algebra will sometimes be referred to as the ``Jones projection algebra''.
\end{definition}

Since we are taking the quotient by the radical of the trace form $\tr_n$, it follows that $Q_n$ has a non-degenerate invariant 
 trace, and hence that
 
\be\label{eq:qss}   Q_n \text{ is semisimple}.
\ee

\begin{remark}\label{rem:l3} We remark that $E_2=1-f_1$, from which it follows that $Q_n(3)\cong \C$ for all $n$. We therefore henceforth
assume that $\ell=4,5,6,\dots$.
\end{remark}

\section{Representation theory of $Q_n$.}
We wish to understand the simple $Q_n$-modules, and ultimately,  a ``fusion rule'', which describes their tensor products.
This will be approached through the cellular theory, which applies to the algebra $\TL_n$.

\subsection{Review of the representation theory of $\TL_n$ at a root of unity} Let $|q^2|=\ell$. As noted in
Remark \ref{rem:l3} it suffices to consider $\ell\geq 4$; further, since $Q_n(\ell)=\TL_n(q)$ for $n<\ell-1$,
we generally assume that $n\geq \ell-1$. 

Since $\TL_n(q)$ is cellular, it has cell modules $W_t(n)$, 
$t\in\CT(n)$ (cf. \S\ref{ss:cmod}).
These have basis the set of monic diagrams $:t\to n$, are generically simple, but occasionally have two composition factors.
As mentioned above, $W_t(n)$ has an invariant form $(\;,\;)_t$; let $L_t:=W_t/\Rad_t$, where $\Rad_t$ is the radical of the form
$(\;,\;)_t$. Then the general theory asserts that the $L_t$ are simple, and represent all the distinct isomorphism classes of
simple $\TL_n(q)$-modules.

The following description of the composition factors of $W_t=W_t(n)$ was given in \cite[Thm. 5.3]{GLA}, and in the formulation here in 
\cite[Thm. 6.9]{ALZ}. 

\begin{theorem}\label{thm:tlcomp}
Let $|q^2|=\ell$, fix $n\geq\ell$ and let $\CT(n)$ be as above. Let $\N'=\{i\in\N\mid i\not\equiv -1(\text{mod } \ell)\}$.
Define $g:\N'\to\N'$ as follows: for $t=a\ell+b\in\N'$, $0\leq b\leq \ell-2$, define $g(t)=(a+1)\ell+\ell-2-b$. Notice that 
$g(t)-t=2(\ell-b-1)$, so that $g(t)\geq t+2$ and $g(t)\equiv t(\text{mod } 2)$.
\begin{enumerate}
\item For $t\in\CT(n)\cap\N'$ such that $g(t)\in\CT(n)$, there is a non-zero homomorphism $\theta_t:W_{g(t)}(n)\to W_t(n)$.
These are explicitly described in \cite[Thm 5.3]{GLA}, and are the only non-trivial homomorphisms between the cell modules
of $\TL_n$.
\item If $t\in\CT(n)$ is such that $t\in\N'$ and $g(t)\in\CT(n)$, then $W_t(n)$ has composition factors $L_t$ and $L_{g(t)}$,
each with multiplicity one. All other cell modules for $\TL_n(q)$ are simple.
\item If $\ell\geq 3$, all the modules $L_t$, $t\in\CT(n)$, are non-zero, and form a complete set of simple $\TL_n(q)$-modules. 
\end{enumerate}
\end{theorem}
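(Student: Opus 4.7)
The plan is to reduce the statement to the general machinery of cellular algebras applied to $\TL_n(q)$, together with an explicit Gram determinant computation and an explicit construction of the homomorphisms $\theta_t$. Since $\TL_n(q)$ is a cellular algebra with cell modules $W_t(n)$ indexed by $\CT(n)$, the abstract theory of Graham--Lehrer tells us that the non-zero quotients $L_t = W_t/\Rad_t$ form a complete set of simple modules, and that any homomorphism $W_s \to W_t$ between cell modules must satisfy $s \geq t$ in the dominance order on $\CT(n)$ (thinking of smaller $t$ as ``higher'' propagating numbers in the Temperley--Lieb diagrammatic convention, depending on conventions). Granted this framework, parts (2) and (3) will follow once we know the pairs $(t,g(t))$ are the only ones supporting non-trivial homomorphisms and that each $L_t$ is non-zero.

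First I would carry out the Gram determinant computation. Using the diagrammatic basis of $W_t(n)$ and the recursion coming from the Jones--Wenzl idempotents, one obtains a closed product formula for $\det(\,,\,)_t$ as a product of quantum integers $[k]_q$. Inspection of this product shows that $W_t$ is a reducible $\TL_n(q)$-module precisely when $t \in \N'$ and $g(t) \leq n$, i.e.\ $g(t) \in \CT(n)$; in all other cases the form is non-degenerate, hence $W_t = L_t$ is simple. This establishes the second sentence of part (2) and, by the cellular theory, implies that $L_t$ is non-zero whenever the form has some vector of non-zero length, which is seen by taking the diagonal diagram (identity cap diagram) and computing its self-pairing to be $1 \neq 0$. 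This disposes of part (3).

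Next I would construct the homomorphism $\theta_t:W_{g(t)}(n)\to W_t(n)$ diagrammatically. The construction is essentially to insert a Jones--Wenzl idempotent $E_{g(t)-1}$ (or rather a suitable cabled cap pattern of height $(g(t)-t)/2 = \ell - b - 1$) between the target and source diagrams: the relation $g(t) = (a+1)\ell + \ell - 2 - b$ is exactly designed so that the inserted cap configuration lies in the kernel of $(\,,\,)_t$ for dimension/parity reasons. Checking that $\theta_t$ is a well-defined $\TL_n(q)$-module map is a direct verification from the defining relations \eqref{eq:reltl}; checking that it is non-zero amounts to exhibiting a single monic diagram from $g(t)$ to $n$ whose image has a non-zero coefficient, which is immediate from the leading term. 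That these are the only non-trivial homomorphisms between cell modules follows from the fact that $\Hom(W_s,W_t)$ injects into the $(s,t)$-block of the Gram matrix, combined with the zero locus computed in the previous step: only the pairs $(g(t),t)$ can contribute.

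Finally, part (2) is obtained by putting these pieces together: given $\theta_t$, its image lies in $\Rad_t$ (since $W_{g(t)}$ is generated by a vector that pairs trivially with everything in $W_t$ with higher-or-equal weight), so $\Rad_t$ surjects onto a non-zero quotient of $W_{g(t)}$; on the other hand $W_{g(t)}$ itself has at most the composition factors allowed by the iteration of $g$, but iterating once more, $g(g(t))$, would exceed $n$ (by the explicit formula $g(t) - t = 2(\ell - b - 1)$ and the observation that $g(t) + 2 > n$ in all relevant ranges), so $W_{g(t)}$ is simple in our setting and $\Rad_t \cong L_{g(t)}$, giving exactly two composition factors with multiplicity one each. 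The main obstacle I expect is the explicit verification in the second step that the inserted Jones--Wenzl pattern indeed defines a homomorphism (rather than just a linear map), since this requires commuting the idempotent past every generator $f_i$; this is the technical core of \cite[Thm.~5.3]{GLA}, and I would either cite it directly or reprove it using the recursive characterisation of $E_{\ell-1}$ from \thmref{thm:jwi}.
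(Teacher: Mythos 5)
The paper itself does not prove Theorem~\ref{thm:tlcomp}; it quotes it from \cite[Thm.~5.3]{GLA} (see also \cite[Thm.~6.9]{ALZ}), so there is no in-paper proof to compare against. That said, your outline --- cellular framework, Gram determinant factorization, diagrammatic construction of $\theta_t$ via Jones--Wenzl insertions --- is essentially the strategy of those references, and the first two steps of your sketch are aligned with how the cited results are actually established.

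There is, however, a genuine error in your final step for part~(2). You claim that $W_{g(t)}$ is simple ``in our setting'' on the grounds that ``$g(t)+2>n$ in all relevant ranges,'' so that $g(g(t))>n$. This is false: the hypothesis is only $g(t)\in\CT(n)$, i.e.\ $g(t)\leq n$, while $n$ can be arbitrarily large. Indeed, Lemma~\ref{lem:g} of the paper gives $g^2(t)=t+2\ell$, and $g(t)\in\N'$ always (since $g(t)\equiv\ell-2-b\pmod\ell$ with $0\leq\ell-2-b\leq\ell-2$), so $W_{g(t)}(n)$ is itself reducible whenever $t+2\ell\leq n$. Consequently you cannot deduce $\Rad_t\cong L_{g(t)}$ from irreducibility of $W_{g(t)}$. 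The correct statement, proved in \cite{GLA}, is that the image of $\theta_t$ is the simple head $L_{g(t)}$ of $W_{g(t)}$ (equivalently, $\ker\theta_t=\Rad_{g(t)}$) and that this image exhausts $\Rad_t$; establishing the latter requires completing the Gram-determinant dimension count you begin but do not finish. There is also a directional slip in the same sentence: you write that ``$\Rad_t$ surjects onto a non-zero quotient of $W_{g(t)}$,'' but $\theta_t$ runs $W_{g(t)}\to W_t$, so it is $W_{g(t)}$ that surjects onto a non-zero submodule of $\Rad_t$, not the reverse.
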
 

\subsection{Representation theory of $Q_n$} We have seen \eqref{eq:qss} that the algebras $Q_n$ are semisimple.
We therefore focus on the description of their simple modules.

\begin{proposition}\label{prop:sim} 
Let $n\geq\ell-1$. The simple $Q_n$-modules are precisely those simple $\TL_n$-modules $L_t$, $t\in\CT(n)$, such that 
$J_\ell L_t=0$, where $J_\ell$ is the ideal of $\TL_n$ generated by $E_{\ell-1}$.
\end{proposition}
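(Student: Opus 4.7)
The plan is to invoke the standard correspondence between modules of a quotient algebra and modules of the original algebra on which the relevant ideal acts trivially, then combine this with the classification of simple $\TL_n$-modules from Theorem \ref{thm:tlcomp}.

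First, I would recall the general fact: for any associative algebra $A$ and two-sided ideal $I \trianglelefteq A$, the pullback along the projection $A \twoheadrightarrow A/I$ gives a fully faithful embedding of the category of $A/I$-modules into the category of $A$-modules, whose essential image is exactly those $A$-modules $M$ with $IM = 0$. Submodules are preserved by this embedding (an $A$-submodule of a module with $IM = 0$ is automatically an $A/I$-submodule), so simple objects correspond to simple objects under this equivalence.

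Applying this with $A = \TL_n$ and $I = J_\ell$, the simple $Q_n$-modules are precisely the simple $\TL_n$-modules $M$ satisfying $J_\ell M = 0$. By Theorem \ref{thm:tlcomp}(3), every simple $\TL_n(q)$-module is isomorphic to exactly one $L_t$ with $t \in \CT(n)$, so the simple $Q_n$-modules are the $L_t$ with $J_\ell L_t = 0$, as claimed.

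There is no real obstacle here; the proposition is essentially a tautological consequence of the classification of simple $\TL_n$-modules and the general principle that modules of $A/I$ are the same as $A$-modules on which $I$ acts trivially. The substantive content of the theory lies in determining, for a given $t \in \CT(n)$, precisely when $J_\ell L_t = 0$ holds, which I would expect to be the subject of the subsequent results in the paper.
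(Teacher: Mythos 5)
Your proof is correct and follows essentially the same route as the paper: pass through the standard equivalence between $Q_n$-modules and $\TL_n$-modules annihilated by $J_\ell$, note that this equivalence preserves simplicity, and then invoke Theorem~\ref{thm:tlcomp}(3) to identify the simple $\TL_n$-modules as the $L_t$. No gap.
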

\begin{proof}
Recall that $Q_n=\TL_n/J_{\ell}$.
 If $M$ is any $Q_n$-module, it may be lifted via the 
canonical surjection $\TL_n\overset{\eta_n}{\lr} Q_n$ to a $\TL_n$-module which we denote by $\wt M$, on which
$J_{\ell}$ acts trivially. Conversely, if $\wt M$ is any $\TL_n$-module on which $J_\ell$ acts trivially, the action factors
through $\TL_n/J_{\ell}=Q_n$, so that $\wt M$ may be thought of as a $Q_n$-module $M$.

Moreover, it is clear that $M$ is simple as $Q_n$-module if and only if $\wt M$ is simple as $\TL_n$-module.

Suppose now that $M$ is a simple $Q_n$-module. Then $\wt M$ is a simple $\TL_n$-module, and hence by
 Theorem \ref{thm:tlcomp} (3), is isomorphic to $L_t$ for some $t\in\CT(n)$ and by the above remarks, $J_\ell$
 acts trivially on $L_t$. Conversely, if $L_t$ satisfies $J_\ell L_t=0$, then $L_t$ is a simple $Q_n$-module. 
 \end{proof}
 
 \begin{remark}\label{rem:zero}
 If $N$ is a $\TL_n$ module, then since $J_\ell=\TL_n E_{\ell-1}\TL_n$,
 it follows that $J_\ell N=0$ if and only if $E_{\ell-1}N=0$. Thus the condition in the Proposition
 is relatively straightforward to check.
 \end{remark}
 
  \begin{remark}[Remark concerning notation]\label{rem:zero} Although {\it a priori} $E_{\ell-1}\in\TL_{\ell-1}$,
  we have regarded it as an element of $\TL_n$ for any $n\geq\ell-1$. The strictly correct notation for $E_{\ell-1}\in\TL_n$,
where $n\geq\ell$, is $E_{\ell-1}\ot I^{\ot (n-\ell+1)}$, where the tensor product is in the Temperley-Lieb category $\bT$,
as described in \cite{GLA} or \cite{LZ5}; that is, it is described diagrammatically as juxtaposition of diagrams, and $I$ is the
identity diagram from $1$ to $1$. We shall use this notation freely below.
  \end{remark}
  
  \begin{theorem}\label{thm:nz} With notation as in Theorem \ref{thm:tlcomp}, let $t\in\CT(n)$ satisfy $t\geq \ell-1$.
  Then the idempotent $E_{\ell-1}\ot I^{\ot (n-\ell+1)}$ acts non-trivially on $L_t$. Thus $Q_n$ has at most $[\frac{\ell}{2}]$
  isomorphism classes of simple modules.
  \end{theorem}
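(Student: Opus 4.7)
The plan is to prove the first claim by exhibiting, for each $t\in\CT(n)$ with $t\geq\ell-1$, a single element $v\in W_t(n)$ whose image under $E:=E_{\ell-1}\ot I^{\ot(n-\ell+1)}$ is not in the radical $\Rad_t$ of the cell form. Since the invariant form descends to a non-degenerate form on $L_t=W_t/\Rad_t$, it suffices to exhibit $v$ with $(v,Ev)\neq 0$.

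The candidate I would use is $v=I^{\ot(\ell-1)}\ot D_0'$, where $D_0':t-\ell+1\to n-\ell+1$ is the standard monic diagram sending the bottom $t-\ell+1$ points straight up to the leftmost top points and pairing the remaining $n-t$ top points into $(n-t)/2$ adjacent cups. This construction makes sense precisely when $t\geq\ell-1$. By bifunctoriality of $\ot$ in $\bT$, one has $Ev=E_{\ell-1}\ot D_0'$ as an element of $W_t(n)$, and consequently $v^*\circ Ev=E_{\ell-1}\ot\bigl((D_0')^*\circ D_0'\bigr)$ in $\TL_t$.

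The key computation is that $(D_0')^*\circ D_0'=\delta^{(n-t)/2}\id_{t-\ell+1}$, where $\delta=-(q+q\inv)$; this is immediate from the fact that each cup of $D_0'$ meets the corresponding cap of $(D_0')^*$ to produce a closed loop contributing $\delta$, while the through-strands recombine to $\id_{t-\ell+1}$. Note $\delta\neq 0$ for $\ell\geq 3$. The coefficient of $\id_t$ in $E_{\ell-1}\ot\id_{t-\ell+1}$ therefore equals the coefficient of $\id_{\ell-1}$ in the diagrammatic expansion of $E_{\ell-1}$; by \thmref{thm:jwi} this coefficient is $h_{F_{\ol{\id}}}(q)$, and since $F_{\ol{\id}}$ is a chain of $\ell-1$ elements one computes directly that $h=1$. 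Thus $(v,Ev)=\delta^{(n-t)/2}\neq 0$, which forces $Ev\notin\Rad_t$ and shows that $E$ acts non-trivially on $L_t$.

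The bound on simple modules then follows from \propref{prop:sim} together with \remref{rem:zero}: simple $Q_n$-modules are precisely the $L_t$ annihilated by $E$, and by the above this is impossible when $t\geq\ell-1$. So the admissible indices lie in $\CT(n)\cap\{0,1,\dots,\ell-2\}$, and a parity count (using $t\equiv n\pmod 2$) yields at most $[\ell/2]$ classes. The main technical obstacle I anticipate is the diagrammatic bookkeeping in the computation above: one must verify both that $(D_0')^*\circ D_0'$ is a pure scalar multiple of $\id_{t-\ell+1}$ (no stray caps or cups survive), and that no non-identity summand of $E_{\ell-1}$ contributes to the coefficient of $\id_t$ after tensoring with $\id_{t-\ell+1}$; the latter is immediate once one observes that every non-identity diagram in $\TL_{\ell-1}$ has at least one cap at its top.
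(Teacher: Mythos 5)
Your proposal is correct and follows essentially the same route as the paper. You both pick the monic diagram $v=I^{\ot t}\ot(\text{cups})$ (your $I^{\ot(\ell-1)}\ot D_0'$ is the same diagram), observe that only the $\id_{\ell-1}$ summand of $E_{\ell-1}$ — whose coefficient is $1$ by Theorem \ref{thm:jwi} — survives because every other summand reduces the number of through-strings, and conclude via the nonvanishing of the cell form that $E$ acts nontrivially on $L_t$. The only cosmetic difference is that the paper first shows $Ev=v$ in $W_t$ and then evaluates $\phi_t(v,v)=\delta^{(n-t)/2}$, whereas you compute $(v,Ev)=\delta^{(n-t)/2}$ directly via $v^*Ev=E_{\ell-1}\ot((D_0')^*\circ D_0')$; these are the same calculation. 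The paper also records the slightly stronger fact $J_\ell L_t=L_t$ (via Graham--Lehrer Cor.\ 2.5(ii)), which you don't need for the stated conclusion.
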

  \begin{proof}
  We begin by showing that
  if $t\in\CT(n)$ and $t\geq \ell-1$ then $E_{\ell-1}\ot I^{\ot (n-\ell+1)}W_t$ contains  all diagrams of the
 form $I^{\ot t}\ot D'$, where $D'$ is any monic diagram from $0$ to $n-t$.

  To see this, note that $W_t$ is spanned by monic diagrams from $t$ to $n$ in $\bT$. 
Take $D=I^{\ot t}\ot D'\in W_t$, where $D'$ is any (monic) diagram
from $0$ to $n-t$.   By the formula in Theorem \ref{thm:jwi}, the coefficient of $I^{\ot(\ell-1)}$ in $E_{\ell-1}$
is $1$. Since all the other summands act trivially on $D$ (because they reduce the number of `through strings'), it follows that 
$E_{\ell-1}\ot I^{\ot (n-\ell+1)}D=D$ in $W_t$, and hence that $D\in E_{\ell-1}\ot I^{\ot (n-\ell+1)}W_t$.

Now if $D=I^{\ot t}\ot D'$ as above and $\phi_t$ is the canonical bilinear form on $W_t$ (see \cite[\S 2]{GL}), then 
$\phi_t(D,D)$ is a power of $-(q+q\inv)$, and hence is non-zero. It follows from \cite[Cor. (2.5)(ii)]{GL} that $W_t=\TL_n D$,
whence $W_t=J_\ell W_t$. Moreover it also follows that $D\not\in\Rad_t$, and hence that modulo $\Rad_t$, $D$ generates
$L_t$, whence $J_\ell L_t=L_t$.
  \end{proof}
  
It follows from the above result that the only possible simple $Q_n$-modules are the $L_t$ with $t< \ell-1$. 
\begin{theorem}\label{thm:zero}
The simple $Q_n$ modules are the $L_t$ with $t\leq \ell-2$. 
\end{theorem}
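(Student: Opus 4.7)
The plan is to combine Theorem \ref{thm:nz} with a restriction argument to $\TL_{\ell-1}$ in order to show that $E_{\ell-1}$ annihilates $L_t(n)$ for every $t\in\CT(n)$ with $t\leq\ell-2$; via Proposition \ref{prop:sim} and Remark \ref{rem:zero} this will identify the simple $Q_n$-modules precisely.

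The key observation is that $E_{\ell-1}\ot I^{\ot(n-\ell+1)}$ lies in the image of the inclusion $\TL_{\ell-1}\hookrightarrow\TL_n$ on the first $\ell-1$ strands, and by the Remark after Proposition \ref{prop:rad} the algebra $\TL_{\ell-1}$ is itself semisimple. Inside that semisimple algebra, $E_{\ell-1}$ is the primitive idempotent cutting out the one-dimensional simple module $W_{\ell-1}(\ell-1)=L_{\ell-1}(\ell-1)$ (on which every $f_i$ acts as $0$); since this Wedderburn block is one-dimensional and the relations $f_iE_{\ell-1}=E_{\ell-1}f_i=0$ force $E_{\ell-1}$ to commute with every generator, $E_{\ell-1}$ is in fact central in $\TL_{\ell-1}$. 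Hence for any $\TL_n$-module $M$ the subspace $E_{\ell-1}M$ equals the $W_{\ell-1}(\ell-1)$-isotypic component of $M\downarrow_{\TL_{\ell-1}}$, and vanishes iff $W_{\ell-1}(\ell-1)$ does not occur in that restriction.

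To verify this non-occurrence for $M=L_t(n)$ with $t\leq\ell-2$, I would combine the exact sequence $0\lr L_{g(t)}(n)\lr W_t(n)\lr L_t(n)\lr 0$ of Theorem \ref{thm:tlcomp} with the iterated branching rule $W_s(m)\downarrow_{\TL_{m-1}}=W_{s-1}(m-1)\oplus W_{s+1}(m-1)$. The latter identifies the multiplicity of $W_{\ell-1}(\ell-1)$ in $W_s(n)\downarrow_{\TL_{\ell-1}}$ with the number of $\pm 1$ lattice paths of length $n-\ell+1$ from $s$ to $\ell-1$ staying non-negative, a quantity given by the classical ballot/reflection formula. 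Since $\TL_{\ell-1}$ is semisimple, restriction is exact; iterating the short exact sequences along the orbit $t,g(t),g^2(t),\ldots$ (which terminates at the first $k$ with $g^k(t)\notin\CT(n)$) expresses the sought multiplicity as the alternating sum
\[
\sum_{k\geq 0}(-1)^k\,\#\{\text{admissible paths}\ g^k(t)\to\ell-1\}.
\]
I would then show this alternating sum vanishes for $t\leq\ell-2$ by a BGG-type cancellation reflecting the action of the affine dihedral group $\langle s_0,s_1\rangle$ generated by the reflections at $-1$ and $\ell-1$, whose orbit on $\Z$ contains exactly the values $g^k(t)$ and their $s_0$-reflections $-2-g^k(t)$.

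I expect the main obstacle to be executing this alternating-sum vanishing cleanly; although the identity is easily verified in low-rank examples (e.g.\ $\ell=4,\ t=0,\ n=10$ gives $14-21+7=0$, while $\ell=4,\ t=2,\ n=10$ gives $34-35+1=0$), producing a uniform sign-reversing involution on the admissible paths, or equivalently an iterated reflection bijection that handles the non-negativity constraint across the whole $g$-orbit, requires careful bookkeeping.
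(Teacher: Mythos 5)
Your route is genuinely different from the paper's. The paper proves the claim by direct diagram analysis: for $t\leq\ell-3$ every basis diagram of $W_t(n)$ is killed by harmonicity of $E_{\ell-1}$, and for $t=\ell-2$ the image $J_\ell W_{\ell-2}$ is shown to lie in $\Rad_{\ell-2}$ by observing that $\phi_{\ell-2}(E_{\ell-1}\ot I^{\ot(n-\ell+1)}D_1,D_2)$ reduces, via idempotence and self-duality of $E_{\ell-1}$, to a multiple of $\tr_{\ell-1}(E_{\ell-1})=0$. You instead identify $E_{\ell-1}M$ with the $W_{\ell-1}(\ell-1)$-isotypic component of $M\downarrow_{\TL_{\ell-1}}$ (correct: $E_{\ell-1}$ is a central idempotent of the semisimple $\TL_{\ell-1}$ projecting onto the one-dimensional block), then reduce the theorem to the vanishing of an alternating sum of ballot numbers via the character formula already in Proposition~\ref{prop:diml} and the TL branching rule. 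This is a legitimate and attractive strategy that conceptually isolates the role of the bound $t\leq\ell-2$. One small point of hygiene: at a root of unity the branching is a short exact sequence $0\to W_{s+1}(m-1)\to W_s(m)\downarrow\to W_{s-1}(m-1)\to 0$, not in general a direct sum; this costs nothing since you only need its image in the Grothendieck group of the semisimple algebra $\TL_{\ell-1}$, but you should say so.

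There is, however, a genuine gap exactly where you flag it: the vanishing of the alternating sum is asserted and numerically checked but not proved, and the theorem is not established without it. Fortunately the gap closes in a few lines. Let $N(a)$ denote the number of unconstrained $\pm1$-paths of length $n-\ell+1$ from $a$ to $\ell-1$; then $N(a)$ depends only on $|a-(\ell-1)|$. The reflection principle at the wall $-1$ gives $m(s)=N(s)-N(-2-s)$ for the constrained count, so your sum is $\sum_k(-1)^k(A_k-B_k)$ with $A_k:=N(g^k(t))$, $B_k:=N(-2-g^k(t))$. Two identities finish it. First, for $t\in\cR$ one has $g(t)=2(\ell-1)-t$, the reflection of $t$ in $\ell-1$, hence $A_0=A_1$. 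Second, Lemma~\ref{lem:g}(2) gives $g^{k+2}(t)=g^k(t)+2\ell$, so $g^{k+2}(t)-(\ell-1)=g^k(t)+\ell+1=(\ell-1)-(-2-g^k(t))$ and therefore $A_{k+2}=B_k$ for all $k\geq0$ (including when both vanish). Then $\sum_k(-1)^kA_k=A_0-A_1+\sum_{k\geq2}(-1)^kA_k=\sum_{k\geq0}(-1)^kA_{k+2}=\sum_k(-1)^kB_k$, so the alternating sum is zero. Note that the first identity genuinely requires $t\leq\ell-2$: for $t\geq\ell$ one has $g(t)=t+2\ol b$ with $b=t\bmod\ell$, which is \emph{not} the reflection of $t$ in $\ell-1$, consistent with Theorem~\ref{thm:nz}. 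With this argument inserted, your proof is complete and independent of the paper's.
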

\begin{proof}
In view of Proposition \ref{prop:sim} and Theorem \ref{thm:nz}, it suffices to show that 
$(E_{\ell-1}\ot I^{\ot (n-\ell+1)})L_{\ell-2}= 0$ and $(E_{\ell-1}\ot I^{\ot (n-\ell+1)})W_t=0$
for $t\leq \ell-3$.

Consider first the case $t\leq \ell-3$. Then any monic diagram $D:t\to n$ contains an upper horizontal arc 
whose right vertex $\leq \ell-1$. Since $E_{\ell-1}$ is harmonic in $\TL_{\ell-1}$, 
it follows that $(E_{\ell-1}\ot I^{\ot (n-\ell+1)})D=0$, and hence that 
$(E_{\ell-1}\ot I^{\ot (n-\ell+1)})W_t=0$.

Now consider the case $t=\ell-2$. Then $g(t)=\ell$ (see the statement of Theorem \ref{thm:tlcomp}) and it follows
from Theorem \ref{thm:tlcomp}(2) and the fact that $n\geq \ell$, that $W_{\ell-2}(n)$ has composition factors
$L_{\ell-2}$ and $L_\ell$. If $D:t\to n$ is a monic diagram, then by the harmonic nature of $E_{\ell-1}$,
$(E_{\ell-1}\ot I^{\ot (n-\ell+1)})D=0$ unless $D=I^{\ot (\ell-2)}\ot D'$, where $D'$ is a diagram from $0$ 
to $n-\ell+2$. But in this case an inspection of the diagrams shows that if $(E_{\ell-1}\ot I^{\ot (n-\ell+1)})D=x\in W_{\ell-2}$,
then $\phi_{\ell-2}(x,x)$ is a multiple of $\tr_{\ell-1}(E_{\ell-1})=0$. 

More generally, if $D_1,D_2$ are diagrams
in $W_{\ell-2}$ and $(E_{\ell-1}\ot I^{\ot (n-\ell+1)})D_i=x_i$ ($i=1,2$), then the same argument shows that 
$\phi_{\ell-2}(x_1,x_2)=0$. 

It follows that for any diagrams $D_1,D_2\in W_{\ell-2}$, $\phi_{\ell-2}((E_{\ell-1}\ot I^{\ot (n-\ell+1)})D_1,D_2)=0$,
since $E_{\ell-1}\ot I^{\ot (n-\ell+1)}$ is idempotent and self dual, so that 
$$
\begin{aligned}
\phi_{\ell-2}((E_{\ell-1}\ot I^{\ot (n-\ell+1)})D_1,D_2)=&\phi_{\ell-2}((E_{\ell-1}\ot I^{\ot (n-\ell+1)})^2D_1,D_2)\\
=&\phi_{\ell-2}((E_{\ell-1}\ot I^{\ot (n-\ell+1)})D_1,(E_{\ell-1}\ot I^{\ot (n-\ell+1)})D_2)\\
=&0.\\
\end{aligned}
$$

Hence $J_\ell W_{\ell-2}\subseteq\Rad_{\ell-2}$, and it follows that $J_\ell L_{\ell-2}=0$.
\end{proof}

\section{Dimensions.}

In this section we shall discuss the dimensions of the simple $Q_n(\ell)$-modules. To begin with, we give explicit formulae for the
dimensions of the cell modules of $\TL_n$.

\subsection{Cell modules for $\TL_n$} Recall that the cell module $W_t(n)$ has a basis consisting of the monic $\TL$-diagrams $D:t\to n$.
Since such diagrams exist only when $t\equiv n(\text{mod }2)$, we may write $n=t+2k$, $k\geq 0$. 
\begin{definition}
For $t,k\geq 0$, we write $w(t,k):=\dim W_t(t+2k)$. By convention, $W_0(0)=0$, so that $w(0,0)=0$.
\end{definition}

\begin{proposition}\label{prop:recw}
We have the following recursion for $w(t,k)$. For integers $t,k\geq 0$:
\be\label{eq:recw}
w(t,k+1)=w(t-1,k+1)+w(t+1,k).
\ee
\end{proposition}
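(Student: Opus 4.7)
The plan is to prove \eqref{eq:recw} by a direct bijection on the diagram basis of $W_t(t+2(k+1))$. Recall that this cell module has as basis the set of monic Temperley--Lieb diagrams $D:t\to t+2k+2$, each consisting of $t$ through-strings together with $k+1$ non-crossing arcs among the top vertices. I would partition this basis according to the role played by the rightmost top vertex $v=t+2k+2$: by planarity $v$ must be either (A) the upper endpoint of a through-string, or (B) the right endpoint of a top arc.

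In case (A), non-crossing forces $v$ to be matched with the rightmost bottom vertex, and deleting both vertices together with the string between them yields a monic diagram from $t-1$ to $(t-1)+2(k+1)$. This operation is easily seen to be a bijection onto a basis of cardinality $w(t-1,k+1)$. In case (B), if the arc at $v$ has other endpoint $j$, I would delete $v$ and replace the arc $(j,v)$ by a new through-string joining $j$ to a freshly adjoined rightmost bottom vertex, producing a monic diagram from $t+1$ to $(t+1)+2k$. The inverse takes such a diagram, locates the through-string emanating from its rightmost bottom vertex (say ending at top vertex $j$), deletes that bottom vertex, appends a new top vertex to the right, and installs an arc from $j$ to the new vertex. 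This yields a bijection onto a basis of cardinality $w(t+1,k)$. Summing the two cases gives \eqref{eq:recw}.

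The only point genuinely requiring checking --- and the closest thing to an obstacle --- is that both operations in case (B) preserve non-crossingness. This reduces to the observation that $v$, respectively the newly adjoined bottom vertex, is the rightmost vertex on its side of the diagram, so the new through-string (respectively the new arc) can always be drawn as the outermost strand on the right without crossing anything. The edge case $t=0$ is accommodated by the convention $w(-1,k+1)=0$, since then case (A) is vacuous and the recursion degenerates to $w(0,k+1)=w(1,k)$. The argument is essentially the classical ballot/Catalan recursion transported onto the Temperley--Lieb diagram model; alternatively, \eqref{eq:recw} follows at once from the well-known branching rule $W_t(n)\big\downarrow_{\TL_{n-1}}\!\cong W_{t-1}(n-1)\oplus W_{t+1}(n-1)$ for the restriction along $\TL_{n-1}\subset\TL_n$, upon setting $n=t+2k+2$.
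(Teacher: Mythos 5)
Your bijective argument is correct and is essentially the same as the paper's: the paper rotates each diagram so that all $t+(t+2k+2)$ vertices lie on a single line and partitions according to whether the pair of vertices straddling the dividing line are joined by an arc (shifting the dividing line when they are not), while you work on the unrotated picture and partition by the role of the rightmost top vertex, which is just the mirror image of the paper's condition. The paper disposes of the boundary cases $t=0$ and $k=0$ by separate direct checks rather than via the convention $w(-1,\cdot)=0$, but that is a cosmetic difference; your alternative appeal to the Temperley--Lieb branching rule is also a valid shortcut at the level of dimensions.
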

\begin{proof}
The proof is based on the interpretation of $w(t,k)$ as the number of monic $\TL$-diagrams from $t$ to $t+2k$.

Consider first the case $t=0$. The assertion is then that $w(0,k+1)=w(1,k)$. But all $\bT$-diagrams $D:1\to 1+2k$ are monic,
as are all diagrams $0\to 2\ell$ (any $\ell$). It follows that $w(1,k)=\dim(\Hom_\bT(1,1+2k))=\dim(\Hom_\bT(0,2+2k))=w(0,k+1)$.
Thus the assertion is true for $t=0$ and all $k\geq 0$. Similarly, if $k=0$, the assertion amounts to $w(t,1)=w(t-1,1)+w(t+1,0)$.
If $t>0$, the left side is easily seen to be equal to $t+1$, while $w(t-1,1)=t$ and $w(t+1,0)=1$. If $t=0$, the left side is 
equal to $\dim(\Hom_\bT(0,2k+2))=\dim(\Hom_\bT(1,2k+1))=w(1,k)$. So the recurrence is valid for $k=0$ and all $t$.

Now consider the general case. Our argument will use the fact that $w(t,k+1)$ may be thought of as the number of 
$\TL$-diagrams $0\to 2t+2(k+1)$ of the form depicted in Fig. 1.

\begin{figure}
\begin{tikzpicture}
\foreach \x in {1,2,4,5,6,7, 9,10}
\filldraw(\x,0) circle (0.05cm);
\draw [dashed] (4.5,-3)--(4.5,1);
\node at (1,.5) {$a_1$}; \node at (2,.5) {$a_2$};\node at (4,.5) {$a_t$};
\node at (5,.5) {$b_1$};\node at (6,.5) {$b_2$};\node at (10,.5) {$b_{t+2(k+1)}$};\node at (7,.5) {$b_3$};
\node at (3,0) {$...$};\node at (8,0) {$...$};
\draw (1,0).. controls (3,-3) and (8,-3).. (10,0); 
\draw (2,0).. controls (4,-2) and (7,-2).. (9,0);
\draw (4,0).. controls (5,-1.3) and (6,-1.3).. (7,0);
\draw (5,0).. controls (5.5,-.75) and (5.5,-.75).. (6,0);
\end{tikzpicture}
\caption{Monic diagram $t\to t+2(k+1)$ as a diagram $0\to 2t+2(k+1)$.}
\end{figure}
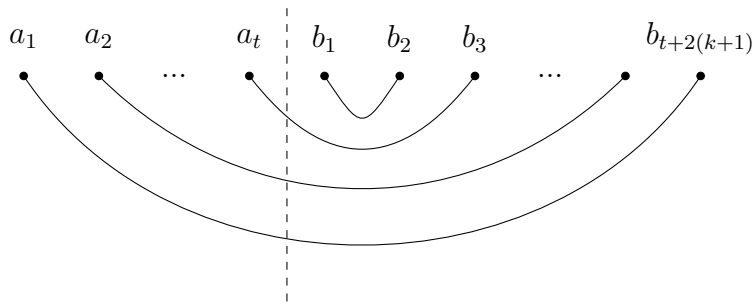

The condition that the diagram be monic is simply that each $a_i$ is joined to some $b_j$, i.e. that each arc crosses the 
dotted line; of course distinct arcs are non-intersecting.

Now evidently such diagrams fall into two types: those in which $[a_t,b_1]$ is an arc, and the others. Now the number of diagrams 
in which $[a_t,b_1]$ is an arc is clearly equal to $w(t-1, k+1)$, while those in which $[a_t,b_1]$ is not an arc are in bijection with
the monic diagrams from $t+1$ to $t+1+2k$, as is seen by shifting the dotted line one unit to the right. Hence the number of the latter 
is $w(t+1,k)$, and the recurrence \eqref{eq:recw} is proved.
\end{proof}

\subsection{A binomial expression for $w(t,k)$} 
\begin{definition}\label{def:bin}
For integers $t,k\geq 0$, define 
\be\label{eq:bin}
F(t,k)=\binom{t+2k}{k}-\binom{t+2k}{k-1}.
\ee
This definition is extended to the domain $\Z\times\Z$ by stipulating that $F(t,k)=0$ if $t<0$ or $k<0$.
\end{definition}

It is easily seen that 
\be\label{eq:bin1}
F(t,k)=\frac{(t+1)(t+2k)(t+2k-1)\dots(t+k+2)}{k!}=\frac{t+1}{t+k+1}\binom{t+2k}{k}.
\ee

\begin{lemma}\label{lem:recf}
We have the following recursion for $F(t,k)$. For $t,k\geq 0$:
\be\label{eq:recf}
F(t,k+1)=F(t-1,k+1)+F(t+1,k).
\ee
\end{lemma}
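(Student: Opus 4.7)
The plan is to verify the recursion by a direct application of Pascal's identity $\binom{n+1}{r} = \binom{n}{r} + \binom{n}{r-1}$ to the binomial coefficients in the definition of $F$. Concretely, I would expand the left-hand side as $F(t,k+1) = \binom{t+2k+2}{k+1} - \binom{t+2k+2}{k}$ and apply Pascal's rule to each of the two binomial coefficients. Grouping the resulting four terms, two copies of $\binom{t+2k+1}{k}$ cancel with opposite signs, reducing the expression to $\binom{t+2k+1}{k+1} - \binom{t+2k+1}{k-1}$.

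In parallel, I would expand the right-hand side directly from the definition,
\[
F(t-1,k+1) + F(t+1,k) = \Bigl[\binom{t+2k+1}{k+1} - \binom{t+2k+1}{k}\Bigr] + \Bigl[\binom{t+2k+1}{k} - \binom{t+2k+1}{k-1}\Bigr],
\]
which telescopes to the same expression, establishing the identity. Regarding boundary cases: when $t=0$ the convention $F(-1,k+1)=0$ is already consistent with the formula, since $\binom{2k+1}{k+1} - \binom{2k+1}{k} = 0$ by the symmetry of binomial coefficients; and when $k=0$ the convention $\binom{n}{-1}=0$ is already built into Pascal's identity. Thus the argument proceeds uniformly for all $t,k\geq 0$ with no separate case analysis required.

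The main obstacle is essentially nonexistent --- this is a routine manipulation of binomial coefficients, and the only thing one has to be careful about is the bookkeeping of the boundary conventions. Its significance lies downstream: together with agreement of initial values ($F(0,0)=0$, $F(t,0)=1$ for $t\geq 1$, $F(0,1)=1$, matching the corresponding values of $w(t,k)$), the identity shows that $F(t,k)$ satisfies exactly the same recursion as $w(t,k)$ from Proposition~\ref{prop:recw}, which is the identification one ultimately wants in order to obtain the closed formula $w(t,k) = F(t,k)$.
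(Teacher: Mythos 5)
Your proof is correct and takes essentially the same route as the paper: a direct application of Pascal's identity to the binomial expression $F(t,k)=\binom{t+2k}{k}-\binom{t+2k}{k-1}$ (the paper simplifies the difference $F(t,k+1)-F(t-1,k+1)$ to $F(t+1,k)$ in one step, while you bring both sides of the claimed identity to the common form $\binom{t+2k+1}{k+1}-\binom{t+2k+1}{k-1}$; this is the same computation organized slightly differently). One small slip in your closing remark about initial values: the formula gives $F(0,0)=\binom{0}{0}-\binom{0}{-1}=1$, not $0$ --- this has no bearing on the lemma itself, but it means the stated agreement with $w(0,0)$ as set by the paper's convention is not literally correct (indeed the paper's own proof of Theorem~\ref{thm:dimw} uses $w(t,0)=1$ for all $t\geq 0$, in tension with the convention $w(0,0)=0$).
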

\begin{proof}
For any integers $r,n\geq 0$ we have the binomial identity $\binom{n}{r+1}=\binom{n-1}{r+1}+\binom{n-1}{r}$.
Using this in the form $\binom{n}{r+1}-\binom{n-1}{r+1}=\binom{n-1}{r}$, we see from \eqref{eq:bin} that
$$
F(t, k+1)-F(t-1, k+1)=\binom{t+2k+1}{k}-\binom{t+2k+1}{k-1}=F(t+1,k).
$$
\end{proof}

\subsection{Catalan calculus--generating functions} For $n\geq 0$, write $c(n):=w(0,2n)$, $c(0)=1$.
It is easily seen by inspecting diagrams that for $n>0$, 
\be\label{eq:recc}
c(n)=\sum_{k=1}^nc(k-1)c(n-k).
\ee
Writing $c(x):=\sum_{n=0}^\infty c(n)x^n$, the recursion \eqref{eq:recc} translates into
\be\label{eq:reccx}
xc(x)^2-c(x)+1=0,
\ee
from which it is immediate that
\be\label{eq:cx}
c(x)=\frac{1-(1-4x)^{\frac{1}{2}}}{2x},
\ee
and applying the binomial expansion, that
\be\label{eq:cn}
c(n)=\frac{1}{n+1}\binom{2n}{n}.
\ee

Now define $W_t(x)=\sum_{k=0}^\infty w(t,k)x^k$. Inspection of diagrams shows that the $w(t,k)$ satisfy the following
recursion.
\be\label{eq:recwc}
w(t,k)=\sum_{\ell=0}^kw(t-1,\ell)c(k-\ell),
\ee
which translates into the recursion $W_t(x)=W_{t-1}(x)c(x)$ for the generating function $W_t(x)$. Using the fact that
evidently $W_0(x)=c(x)$, we have proved the following statement.
\begin{proposition}\label{prop:wx}
For $t=0,1,2,\dots$, we have $W_t(x)=\sum_{k=0}^\infty w(t,k)x^k=c(x)^{t+1}$.
\end{proposition}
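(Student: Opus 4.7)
The plan is to prove $W_t(x) = c(x)^{t+1}$ by induction on $t \geq 0$, leveraging the convolution relation \eqref{eq:recwc}. The base case $t = 0$ is essentially tautological: a monic $\TL$-diagram $0 \to 2k$ is a non-crossing perfect matching on $2k$ points, so $w(0,k) = c(k)$, whence $W_0(x) = \sum_{k \geq 0} c(k)\, x^k = c(x)$ by definition of the Catalan generating function. The induction step is purely formal once \eqref{eq:recwc} is in hand: multiplying that recurrence by $x^k$ and summing yields the Cauchy product identity $W_t(x) = c(x)\, W_{t-1}(x)$, from which $W_t(x) = c(x)^{t+1}$ follows immediately from the hypothesis $W_{t-1}(x) = c(x)^t$.

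The real content of the proof therefore lies in justifying the convolution recurrence \eqref{eq:recwc}, which the paper dispatches as ``inspection of diagrams''; I would make this bijective argument fully explicit. Given a monic diagram $D : t \to t + 2k$, label the top vertices $1, 2, \dots, t$ and the bottom vertices $1, 2, \dots, t + 2k$, and follow the through-string issuing from top vertex $1$. By the non-crossing constraint together with the fact that no arc can pass above this string, the string must terminate at a bottom vertex whose index is \emph{odd}, say $2m + 1$ for some $0 \leq m \leq k$; equivalently, the $2m$ bottom vertices strictly to its left form an isolated sub-region that must be matched among themselves by a non-crossing pairing. These pairings are counted by $c(m)$, while the complementary piece of $D$ — obtained by erasing top vertex $1$, the distinguished through-string, and bottom vertices $1, \dots, 2m+1$ — is a monic diagram from $t - 1$ to $(t-1) + 2(k - m)$, contributing a factor of $w(t - 1, k - m)$. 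Summation over $m$ then gives
\be
w(t, k) \;=\; \sum_{m = 0}^{k} c(m)\, w(t-1,\, k - m),
\ee
which, after reindexing $\ell = k - m$, is exactly \eqref{eq:recwc}.

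The main obstacle — really the only subtle point — is the parity assertion that the first through-string must land on an odd-indexed bottom vertex. This is where planarity of $\TL$-diagrams does the work: any bottom vertex strictly to the left of this string is cut off from the rest of the bottom row by the string and from the top row entirely, so it must be paired with another bottom vertex in the same region, forcing the number of such isolated vertices to be even. Once this structural observation is granted, the Catalan factor $c(m)$ appears naturally and the rest of the argument is a routine manipulation of formal power series, so no additional analytic issues intervene (both $c(x)$ and $W_t(x)$ are treated as elements of $\C[[x]]$, and the Cauchy product is well-defined termwise).
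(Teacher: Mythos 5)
Your proof is correct and follows the same route as the paper: translate the convolution recursion \eqref{eq:recwc} into the generating-function identity $W_t(x) = c(x)\,W_{t-1}(x)$ and induct on $t$ from the base case $W_0(x) = c(x)$. The bijective argument you supply for \eqref{eq:recwc} --- peeling off the through-string from the leftmost top vertex, which by planarity must land on an odd-indexed bottom vertex and isolates a Catalan-enumerated cap region to its left --- is precisely the content behind the paper's ``inspection of diagrams'' and is sound.
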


\begin{corollary}\label{cor:wxy}
We have the following equation in $\Z[[x,y]]$.
\be\label{eq:wxy}
W(x,y):=\sum_{t,k=0}^\infty w(t,k)y^tx^k=\frac{c(x)}{1-yc(x)}.
\ee
\end{corollary}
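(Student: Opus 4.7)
The plan is to derive the two-variable identity directly from \propref{prop:wx} by reorganising the double sum and recognising a geometric series in $y$. Specifically, I would regroup
$$
W(x,y) \;=\; \sum_{t,k=0}^\infty w(t,k)\, y^t x^k \;=\; \sum_{t=0}^\infty \Bigl(\sum_{k=0}^\infty w(t,k)\, x^k\Bigr) y^t \;=\; \sum_{t=0}^\infty W_t(x)\, y^t,
$$
which is legitimate in $\Z[[x,y]]$ since for each fixed pair $(t,k)$ the monomial $y^t x^k$ occurs with a single coefficient. By \propref{prop:wx} the inner sum equals $c(x)^{t+1}$, so
$$
W(x,y) \;=\; \sum_{t=0}^\infty c(x)^{t+1}\, y^t \;=\; c(x)\sum_{t=0}^\infty \bigl(y\, c(x)\bigr)^t.
$$

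The only point that still needs justification is the formal geometric sum identity $\sum_{t\geq 0}(y c(x))^t = (1-yc(x))^{-1}$ in $\Z[[x,y]]$. Here one observes that $c(x)\in\Z[[x]]$ has constant term $c(0)=1$, so $y\,c(x)$ has zero constant term as an element of $\Z[[x,y]]$; hence $y\,c(x)$ lies in the maximal ideal $(x,y)$ and the series $\sum_{t\geq 0}(yc(x))^t$ converges in the $(x,y)$-adic topology and is by construction the inverse of $1-yc(x)$. Combining these two formulas yields the desired identity \eqref{eq:wxy}.

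There is no real obstacle: the whole content is already packaged in \propref{prop:wx}, and what remains is formal. The one subtlety worth flagging explicitly, to keep the argument honest, is precisely the check above that $yc(x)$ is a topologically nilpotent element of $\Z[[x,y]]$, ensuring that the manipulation of the geometric series takes place inside the ring of formal power series in two variables rather than as a purely symbolic rearrangement.
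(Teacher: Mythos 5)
Your argument is correct and is precisely the (implicit) proof the paper intends: the corollary is stated immediately after Proposition~\ref{prop:wx} with no separate proof, since it follows by summing $W_t(x)=c(x)^{t+1}$ over $t$ as a geometric series in $yc(x)$. Your extra remark that $yc(x)$ lies in the ideal $(x,y)$, so the geometric series converges $(x,y)$-adically in $\Z[[x,y]]$, is a sound and appropriate justification of the formal manipulation.
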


\subsection{A closed expression for $w(t,k)$} We shall prove the following theorem.

\begin{theorem}\label{thm:dimw}
For integers $t,k\geq 0$, we have 
\be\label{eq:wf}
w(t,k)=F(t,k).
\ee
 That is,
\be\label{eq:dimw}
\dim(W_t(t+2k))=\frac{t+1}{t+k+1}\binom{t+2k}{k}.
\ee
\end{theorem}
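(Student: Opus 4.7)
The plan is to prove Theorem \ref{thm:dimw} by induction, exploiting the fact that, by Proposition \ref{prop:recw} and Lemma \ref{lem:recf}, both $w(t,k)$ and $F(t,k)$ satisfy the same recursion
\[
u(t,k+1) = u(t-1,k+1) + u(t+1,k) \qquad (t, k \geq 0),
\]
with the convention $u(-1, \cdot) = 0$. It therefore suffices to verify that $w$ and $F$ agree on enough boundary values and then to run a lexicographic induction.

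First I verify the two boundaries. For $k = 0$, the only monic $\TL$-diagram from $t$ to $t$ is the identity, so $w(t, 0) = 1$, and directly from Definition \ref{def:bin} one has $F(t, 0) = \binom{t}{0} - \binom{t}{-1} = 1$. For $t = 0$, every $\TL$-diagram from $0$ to $2k$ is automatically monic, so $w(0, k) = c(k)$, which by \eqref{eq:cn} equals $\tfrac{1}{k+1}\binom{2k}{k}$; a one-line computation gives the same value for $F(0,k) = \binom{2k}{k} - \binom{2k}{k-1}$. Hence $w$ and $F$ agree on the union of the $k=0$ and $t=0$ axes.

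For the inductive step I order pairs $(k, t) \in \N \times \N$ lexicographically, so that on the right-hand side of the recursion the pair $(k+1, t-1)$ is strictly lex-smaller than $(k+1, t)$ and the pair $(k, t+1)$ is strictly lex-smaller as well. Assuming $w(t', k') = F(t', k')$ for all lex-smaller pairs, the two recursions combine to give
\[
w(t, k+1) \;=\; w(t-1, k+1) + w(t+1, k) \;=\; F(t-1, k+1) + F(t+1, k) \;=\; F(t, k+1),
\]
completing the induction and proving \eqref{eq:dimw}.

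Main obstacle: there is no serious obstacle, since the heavy lifting already sits in Proposition \ref{prop:recw} and Lemma \ref{lem:recf}; the only piece of bookkeeping is checking that the lex order on $(k,t)$ makes the induction well-founded. A more direct alternative bypasses Lemma \ref{lem:recf} entirely: using Proposition \ref{prop:wx}, the assertion $w(t, k) = F(t, k)$ is equivalent to $[x^k] c(x)^{t+1} = \tfrac{t+1}{t+k+1}\binom{t+2k}{k}$, which drops out of Lagrange inversion applied to the identity $c = 1 + x c^2$ (with $H(c) = c^{t+1}$) together with a single binomial simplification. I would present the inductive argument as the main proof, as it uses only material already developed in the section.
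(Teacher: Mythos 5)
Your proof is correct and is essentially the same as the paper's: both establish the boundary cases $k=0$ and $t=0$ and then run a double induction off the matching recursions \eqref{eq:recw} and \eqref{eq:recf}. The only cosmetic difference is that the paper organizes the induction on $t$ (using the recursion rearranged as $F(t_0+1,k)=F(t_0,k+1)-F(t_0-1,k+1)$) while you phrase it as a lexicographic induction on $(k,t)$; these are the same argument.
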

\begin{proof}
Consider first the case $k=0$. Then for any $t\geq 0$, $w(t,0)=1$, while $F(t,0)=\frac{1}{1}\binom{t}{0}=1.$
Thus the result is true for $k=0$ and all $t$. Next consider the case $t=0$. Then for all $k\geq 0$, 
$w(0,k)=c(k)=\frac{1}{k+1}\binom{2k}{k}$, while $w(0,k)$ is evidently also equal to $c(k)$ by  
\eqref{eq:bin1}. Thus the assertion is true for $t=0$ and all $k$.

Now suppose that the following assertion is true.

\noindent {$\mathbf{P(t_0)}$:} {\it The result is true for pairs $(t,k)$ such that $t\leq t_0$ (and any $k\geq 0$).}

We now use the recursions \eqref{eq:recw} and \eqref{eq:recf}, in the following form.
\be\label{eq:comprec}
\begin{aligned}
F(t_0+1,k)=&F(t_0,k+1)-F(t_0-1,k+1)\text{ and }\\
w(t_0+1,k)=&w(t_0,k+1)-w(t_0-1,k+1).\\
\end{aligned}
\ee

By assumption, the right sides of \eqref{eq:comprec} are equal, whence $F(t_0+1,k)=w(t_0+1,k)$, i.e. $\mathbf P(t_0+1)$ 
holds. Since we have seen that $\mathbf P(0)$ is true, the result follows.
\end{proof}

\subsection{Application to the simple modules for $Q_n(\ell)$} Fix $\ell\geq 3$ and assume that $q^2$ has order $\ell$.
We wish to study the dimensions of the simple $Q_n$-modules, which we have already seen (Theorem \ref{thm:zero})
are the simple $\TL_n$-modules $L_t(n)$ for $0\leq t\leq \ell-2$, and $t\equiv n(\text{mod }2)$. 

\begin{definition}\label{def:dims}
It will be convenient to denote the dimensions of the cell modules $W_t(n)$ and simple modules $L_t(n)$ for
the algebra $\TL_n$ by $w_t(n)$ and $\ell_t(n)$ respectively. We regard $w_t$ and $\ell_t$ as functions of $n\in\Z_{\geq 0}$.
These functions satisfy the following conditions: $w_t(n)=0$ if $t>n$ or if $t<0$. More generally, $w_t(n)=\ell_t(n)=0$
for any $t\in\Z$ such that $t\not\in \CT(n)$ (see Theorem \ref{thm:tlcomp}).
\end{definition}

 We have seen by Theorem \ref{thm:tlcomp} that in general, the composition factors of the cell module $W_t(n)$ 
(for $n\in\N'$) are $L_t(n)$ and $L_{g(t)}(n)$ where $g:\N'\to\N'$ is the function defined in 
Theorem \ref{thm:tlcomp}. The following result is an easy consequence of this.

 \begin{proposition}\label{prop:diml} Let $|q^2|=\ell$. 
We have, for $t\in\N'=\{s\in\Z_{\geq 0}\mid s\not\equiv -1(\text{mod }\ell)\}$:
\be\label{eq:diml}
\ell_t(n)=\sum_{i=0}^\infty (-1)^iw_{g^i(t)}(n).
\ee
\end{proposition}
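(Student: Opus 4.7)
The plan is to reduce the identity to a one-step recurrence and then iterate. The starting observation is that Theorem~\ref{thm:tlcomp}(2) gives a precise description of the Jordan--Hölder content of $W_t(n)$ for $t \in \N'$: when $g(t) \in \CT(n)$, the module $W_t(n)$ has exactly two composition factors, namely $L_t(n)$ and $L_{g(t)}(n)$, each with multiplicity one; when $g(t) \notin \CT(n)$, the cell module $W_t(n)$ is simple, so $W_t(n) \cong L_t(n)$. Using the convention in Definition~\ref{def:dims} that $\ell_s(n) = 0$ whenever $s \notin \CT(n)$, both cases can be written uniformly as
\be\label{eq:keyrec}
w_t(n) = \ell_t(n) + \ell_{g(t)}(n), \qquad \text{i.e.,}\qquad \ell_t(n) = w_t(n) - \ell_{g(t)}(n).
\ee

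Next I would iterate \eqref{eq:keyrec}. Since $g$ maps $\N'$ into itself (the residue of $g(t)$ modulo $\ell$ is $\ell-2-b$, which lies in $\{0,1,\ldots,\ell-2\}$), the expression $\ell_{g^i(t)}(n)$ makes sense for all $i \geq 0$ as long as we interpret it as $0$ when $g^i(t) \notin \CT(n)$. Substituting \eqref{eq:keyrec} into itself $N$ times gives the telescoping identity
\be\label{eq:telescope}
\ell_t(n) = \sum_{i=0}^{N} (-1)^i w_{g^i(t)}(n) + (-1)^{N+1} \ell_{g^{N+1}(t)}(n).
\ee

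The final step is to verify termination. By the formula $g(t) - t = 2(\ell - 1 - b) \geq 2$ recorded in Theorem~\ref{thm:tlcomp}, the sequence $g^i(t)$ is strictly increasing, so for all sufficiently large $i$ we have $g^i(t) > n$, hence $g^i(t) \notin \CT(n)$. By Definition~\ref{def:dims} this forces both $w_{g^i(t)}(n) = 0$ and $\ell_{g^i(t)}(n) = 0$ for large $i$, so the sum in \eqref{eq:telescope} stabilises and the correction term vanishes, yielding precisely the claimed formula \eqref{eq:diml}.

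I do not expect a serious obstacle; the only thing to be careful about is bookkeeping at the boundary, namely checking that the unified form \eqref{eq:keyrec} correctly subsumes the case $g(t) \notin \CT(n)$ (i.e., the case where $W_t(n)$ is simple) and that the alternating sum truncates correctly when some intermediate $g^i(t)$ escapes $\CT(n)$ but later iterates formally re-enter the counting as zero. Both points are resolved by the convention in Definition~\ref{def:dims}, so the proof is essentially a clean iteration of \eqref{eq:keyrec}.
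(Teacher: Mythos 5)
Your argument is correct and is essentially the same as the paper's: both derive the one-step recurrence $\ell_t(n) = w_t(n) - \ell_{g(t)}(n)$ from Theorem~\ref{thm:tlcomp}(2), iterate it, and invoke the strict monotonicity of $g$ to force termination once $g^i(t) > n$. Your extra care in checking that the convention of Definition~\ref{def:dims} uniformly handles the case $g(t) \notin \CT(n)$ is a small but harmless elaboration of what the paper leaves implicit.
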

\begin{proof}
Note that since $g$ is a strictly increasing function on $\N'$, for any particular $n$, the sum on the right side
of \eqref{eq:diml} is finite. 

It is evident from Theorem \ref{thm:tlcomp} (2), that for any $t\in\N'$, 
\be\label{eq:lt1}
\ell_t(n)=w_t(n)-\ell_{g(t)}(n).
\ee

Applying \eqref{eq:lt1} with $t$ replaced by $g(t)$ gives $\ell_t(n)=w_t(n)-w_{g(t)}(n)+\ell_{g^2(t)}(n)$.
Applying this repeatedly, and noting that there is an integer $t_0\in\N'$ such that $t_0\leq n$ and $g(t_0)>n$, 
we obtain the relation \eqref{eq:diml}.
\end{proof}

\begin{remark}\label{rem:fns}
\begin{enumerate}
\item In view of the interpretation of $w_t$ and $\ell_t$ as functions (see Definition \ref{def:dims}), the relation 
\eqref{eq:diml} will be written as the following equality of functions on $\N'$.
\be\label{eq:fdim}
\ell_t=\sum_{i=0}^\infty (-1)^iw_{g^i(t)}.
\ee

\item Note that by Theorem \ref{thm:zero}, the simple $Q_n$-modules are those $L_t$ with $0\leq t\leq\ell-2$. Thus 
for these modules, $t\in\N'$, and the analysis above applies to all the simple modules.
\end{enumerate}
\end{remark}

\section{The Ising algebra $Q_n(4)$}\label{sss:4} 

We shall now give a detailed analysis of the particular case $\ell=4$. We refer to $Q_n$ in this case as
the Ising algebra. In this section, $\ell=4$, so that $q^2=\sqrt {-1}$ and we take $q=-\exp{\frac{\pi i}{4}}$. 
The first line of the relation \eqref{eq:reltl} then reads:
\[
f_i^2=\sqrt {2}f_i.
\]

\subsection{The function $g$}
 The function $g(t)$ ($t\in\N$, $t\not\equiv -1\text{ (mod }4)$) defined in Theorem \ref{thm:tlcomp} is easily described in this case. We have

\be\label{eq:g4}
g(t)=
\begin{cases}
t+6  \text{ if } t\equiv 0\text{(mod }4)\\
t+2  \text{ if } t\equiv 2\text{(mod }4)\\
t+4  \text{ if } t\equiv 1\text{(mod }4).\\
\end{cases}
\ee

\subsection{Structure and dimension of $Q_n$}
The main theorem of this section is

\begin{theorem}\label{thm:ising}
Let $\ell=|q^2|=4$ and for $n=3,4,5,6,\dots$, let $Q_n$ be the Ising algebra defined in Definition \ref{def:q}.
The dimensions of the simple $Q_n$-modules are as follows. If $n$ is even, then $Q_n$ has just two simple modules
$L_0$ and $L_2$, whose dimensions $\ell_0(n)$ and $\ell_2(n)$ are given by
\be\label{eq:dimlev}
\ell_0(n)=\ell_2(n)=2^{\frac{n}{2}-1}.
\ee
If $n$ is odd, then $Q_n$ has a unique simple module $L_1(n)$, whose dimension $\ell_1(n)$ is given by
\be\label{eq:dimlod}
\ell_1(n)=2^{\frac{n-1}{2}}.
\ee
\end{theorem}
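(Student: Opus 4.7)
The strategy is to combine Theorem \ref{thm:zero} (which identifies the simple $Q_n(4)$-modules) with Proposition \ref{prop:diml} (which writes their dimensions as alternating sums of cell-module dimensions), and then to evaluate those sums by a short generating-function calculation.

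Applied with $\ell=4$, Theorem \ref{thm:zero} shows that the simple $Q_n$-modules are precisely the $L_t(n)$ with $0\leq t\leq 2$ and $t\equiv n\pmod 2$; this yields $\{L_0(n),L_2(n)\}$ for $n$ even and $\{L_1(n)\}$ for $n$ odd, as claimed. Reading off the $g$-orbits of $0,1,2$ from \eqref{eq:g4}, the orbit of $1$ is $\{1,5,9,13,\dots\}$ (arithmetic with common difference $4$), while
$$\text{orbit}(0)=\{0,6,8,14,16,22,24,\dots\},\qquad \text{orbit}(2)=\{2,4,10,12,18,20,\dots\}$$
are each a union of two interleaved arithmetic progressions of common difference $8$. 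By Proposition \ref{prop:diml} together with Theorem \ref{thm:dimw}, each $\ell_t(n)$ is therefore an explicit alternating sum of binomials indexed by the corresponding orbit.

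To evaluate these sums I pass to generating functions. Write $\hat\ell_t(z):=\sum_{n\geq 0}\ell_t(n)z^n$. A short computation from Proposition \ref{prop:wx} gives $\sum_n w_s(n)z^n = z^s c(z^2)^{s+1}$, so upon setting $u:=zc(z^2)$ one obtains
$$\hat\ell_t(z)=c(z^2)\sum_{i\geq 0}(-1)^iu^{g^i(t)}.$$
The orbit descriptions above turn each sum into a geometric series in $u$ (or in $u^8$) with a small polynomial prefactor, giving an explicit rational function of $u$. The quadratic \eqref{eq:reccx} for $c(x)$, evaluated at $x=z^2$, becomes $zu^2-u+z=0$, equivalently $c(z^2)=1+u^2$; from this one also extracts the key identity $1+u^4=u^2(1-2z^2)/z^2$. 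Substituting these two identities collapses the rational functions of $u$ to
$$\hat\ell_0(z)=\frac{1-z^2}{1-2z^2},\qquad \hat\ell_1(z)=\frac{z}{1-2z^2},\qquad \hat\ell_2(z)=\frac{z^2}{1-2z^2},$$
and reading off Taylor coefficients yields \eqref{eq:dimlev} and \eqref{eq:dimlod}.

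I expect the delicate step to be the final elimination of $u$ in favour of $z$: the sums over orbits are transparent as rational functions of $u$, but collapsing them to rational functions of $z$ alone requires careful use of the defining quadratic for $u$. An alternative would be a direct induction showing $\ell_t(n+2)=2\ell_t(n)$, but because the branching recursion for $w_t(n)$ behaves asymmetrically near the boundary $t\in\{0,1\}$, this requires extra bookkeeping and seems less clean than the generating-function route above.
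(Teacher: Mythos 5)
Your proposal is correct, and it is a genuinely different route from the one taken in the paper. Both proofs start from the same place: identifying the simple $Q_n(4)$-modules via Theorem \ref{thm:zero} and then writing the dimensions $\ell_t$ as the alternating sums over the $g$-orbits of $t\in\{0,1,2\}$ (in the paper this is \eqref{eq:dimli4}). After that the paper proceeds by a hands-on induction on $n$, using the two-step recursion \eqref{eq:recw3} for $w_t$ to establish $\ell_1(n+2)=2\ell_1(n)$, $\ell_2(n+2)=\ell_0(n)+\ell_2(n)$, and $\ell_0(n+2)=\ell_0(n)+\ell_2(n)$, and then closing the loop with the base cases $n=3,4$. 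As you anticipate, the boundary term $t=0$ requires a separate treatment in that induction (the case split in \eqref{eq:recw3} and the correspondingly messier \eqref{eq:red0-4}). Your generating-function route sidesteps that bookkeeping: from Proposition \ref{prop:wx} one gets $\sum_n w_s(n)z^n = z^s c(z^2)^{s+1}$, the orbit sums become geometric series in $u=zc(z^2)$, and the two algebraic identities $c(z^2)=1+u^2$ and $1+u^4=u^2(1-2z^2)/z^2$ (both following from the defining quadratic \eqref{eq:reccx}) collapse everything to $\hat\ell_0=(1-z^2)/(1-2z^2)$, $\hat\ell_1=z/(1-2z^2)$, $\hat\ell_2=z^2/(1-2z^2)$. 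I checked the algebra: the orbit of $0$ gives $(1-u^6)/(1-u^8)$, of $2$ gives $(u^2-u^4)/(1-u^8)$, of $1$ gives $u/(1+u^4)$, and each simplifies as claimed. What you gain is a uniform calculation with no case split at $t=0$; what the paper's argument buys is that the three scalar recursions it extracts are precisely the $\ell=4$ instance of the general matrix recursion of Theorem \ref{thm:qdim-main}, so its proof foreshadows the treatment of arbitrary $\ell$ in the later sections. Your method would also generalize, but the elimination of $u$ in favour of $z$ is specific to the small orbit structure at $\ell=4$ and would not stay this clean for general $\ell$.
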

\begin{proof}
Combining the general formula \eqref{eq:diml} with the particular values \eqref{eq:g4} of the function $g$ in this case,
we obtain the following expressions for the relevant dimensions.
\be\label{eq:dimli4}
\begin{aligned}
\ell_0&=\sum_{t\in\N,\;t\equiv 0\text{(mod }8)} w_t-\sum_{t\in\N,\;t\equiv 6\text{(mod }8)} w_t,\\
\ell_2&=\sum_{t\in\N,\;t\equiv 2\text{(mod }8)} w_t-\sum_{t\in\N,\;t\equiv 4\text{(mod }8)} w_t\text{ and}\\
\ell_1&=\sum_{t\in\N,\;t\equiv 1\text{(mod }8)} w_t-\sum_{t\in\N,\;t\equiv 5\text{(mod }8)} w_t.\\
\end{aligned}
\ee

We shall prove the relations \eqref{eq:dimlev} and \eqref{eq:dimlod} by induction on $n$. Observe that if $n=4$,
$\ell_0(4)=w_0(4)=2$, and $\ell_2(4)=w_2(4)-w_4(4)=3-1=2$, so that \eqref{eq:dimlev} is true for $n=4$. 
Similarly $\ell_1(3)=w_1(3)=2$, so that \eqref{eq:dimlod} holds for $n=3$; moreover $\ell_1(5)=w_1(5)-w_5(5)
=5-1-4$, so that \eqref{eq:dimlod} also holds for $n=5$.

Next, notice that the recursion \eqref{eq:recw} for $w_t(n)$ may be written
\be\label{eq:recw2}
w_t(n+1)=w_{t-1}(n)+w_{t+1}(n) \text{ for }t\geq 0\text{ and } n\in\N'.
\ee
Applying the same recursion to both terms on the right side
 of \eqref{eq:recw2} and replacing $n$ by $n+1$, we obtain
\be\label{eq:recw3}
w_t(n+2)=
\begin{cases}
w_{t-2}(n)+2w_t(n)+w_{t+2}(n)\text{ if }t\geq 1\\
w_{t}(n)+w_{t+2}(n)\text{ if }t=0.\\
\end{cases}
\ee
Note that the relation \eqref{eq:recw3} holds for all $n\in\Z$ and $t$ in the stated range, subject to the interpretation of $w_t$ 
given in Definition \ref{def:dims}.

Let us now compute $\ell_1(n)$. Since the evaluation of the infinite sums in \eqref{eq:dimli4} 
at any $n\in\N'$ involves only a finite sum, we may rearrange the terms in any way we choose.
The third line of \eqref{eq:dimli4} may therefore be written as follows. 
\be\label{eq:l1-4}
\ell_1=\sum_{t\in \N,\; t\equiv 1\text{(mod }8)}(w_t-w_{t+4}).
\ee

But it follows from two applications of the first line of \eqref{eq:recw3} that for any 
odd integer $n\geq 3$ and $t\geq 1$, we have 
\be\label{eq:red1-4}
(w_t-w_{t+4})(n+2)=(w_{t-2}-w_{t+6})(n)+2(w_t-w_{t+4})(n).
\ee

Summing \eqref{eq:red1-4} over $t\geq 0$ such that $t\equiv 1\text{(mod }8)$, we obtain, using \eqref{eq:l1-4},
\[
\ell_1(n+2)=\sum_{t\geq 1,\;t\equiv 1\text{(mod }8)}\left(w_{t-2}(n)-w_{t+6}(n)\right)+2\ell_1(n).
\]
Moreover $\sum_{t\geq 1,\;t\equiv 1\text{(mod }8)}\left(w_{t-2}(n)-w_{t+6}(n)\right)=w_{-1}(n)+w_7(n)-w_7(n)
+w_{15}(n)-w_{15}(n)+\dots=0$. Hence $\ell_1(n+2)=2\ell_1(n)$ for $n\geq 3$, and since $\ell_0(3)=2$,
it follows by induction that $\ell_1(2r+1)=2^r$ ($r\geq 1$), as asserted in \eqref{eq:dimlod}.

Next consider $\ell_2(n)$ for $n\geq 4$ even. The second line of  \eqref{eq:dimli4} may be written
\be\label{eq:l2-4}
\ell_2=\sum_{t\in \N,\; t\equiv 2\text{(mod }8)}(w_t-w_{t+2}).
\ee

A similar argument to that above shows that for $t\geq 2$, 
\be\label{eq:red2-4}
(w_t-w_{t+2})(n+2)=(w_{t-2}-w_{t+4})(n)+(w_t-w_{t+2})(n),
\ee
and summing over $t$ such that $t\equiv 2\text{(mod }8)$, we see that 
\be\label{eq:recl2}
\ell_2(n+2)=\ell_0(n)+\ell_2(n).
\ee

Now consider $\ell_0$. In this case, we have 
\be\label{eq:l0-4}
\ell_0=\sum_{t\in \N,\; t\equiv 0\text{(mod }8)}(w_t-w_{t+6}).
\ee

In view of \eqref{eq:recw3} we need to treat the case $t=0$ separately. 
Arguing as above, we obtain
\be\label{eq:red0-4}
(w_t-w_{t+6})(n+2)=
\begin{cases}
(w_{t-2}-w_{t+6})(n)+(w_t-w_{t+8})(n)+(w_t-w_{t+6})(n)\\
\hfill+(w_{t+2}-w_{t+4})(n)\text{ if }t\geq 2,\\
(w_0+w_2-w_4-2w_6-w_8)(n)\text { if }t=0.\\
\end{cases}
\ee

One now performs the straightforward but detailed task of computing
the sum $\sum_{t\in \N,\; t\equiv 0\text{(mod }8)}(w_t-w_{t+6})(n+2)$ using \eqref{eq:red0-4}.
This shows that
\be\label{eq:recl0}
\ell_0(n+2)=\ell_0(n)+\ell_2(n).
\ee

Since $\ell_0(4)=\ell_2(4)=2$, the relations \eqref{eq:recl2} and \eqref{eq:recl0}
complete the proof of the Theorem by induction.
\end{proof}

\begin{corollary}\label{cor:dimq-4}
Let $\ell=|q^2|=4$. The dimension of the Ising algebra $Q_n$ defined in Definition \ref{def:q}
is $2^{n-1}$.
\begin{proof}
We have seen that $Q_n$ is a semisimple finite dimensional algebra. Its dimension is therefore the sum of the squares
of the dimensions of its distinct simple modules. The statement now evidently follows from Theorem \ref{thm:ising}.
\end{proof}
\end{corollary}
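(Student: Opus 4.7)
The plan is to invoke the Wedderburn-Artin structure theorem. We have already established in \eqref{eq:qss} that $Q_n(4)$ is semisimple and finite-dimensional over $\C$, so it decomposes as a finite direct sum of matrix algebras, one factor $M_{\dim S}(\C)$ for each isomorphism class of simple module $S$. Taking dimensions gives the standard identity
\[
\dim Q_n(4) = \sum_S (\dim S)^2,
\]
where $S$ ranges over a set of representatives of the simple $Q_n(4)$-modules. Theorem \ref{thm:zero} guarantees that this is a finite and complete list, and Theorem \ref{thm:ising} both enumerates these simples and computes their dimensions explicitly.

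With those tools in place, the corollary reduces to a direct substitution. For even $n$, Theorem \ref{thm:ising} provides exactly two simple modules $L_0(n)$ and $L_2(n)$, each of dimension $2^{n/2-1}$, so the sum of squares is
\[
2\cdot\bigl(2^{n/2-1}\bigr)^2 = 2\cdot 2^{n-2} = 2^{n-1}.
\]
For odd $n$, there is a single simple module $L_1(n)$ of dimension $2^{(n-1)/2}$, contributing $(2^{(n-1)/2})^2 = 2^{n-1}$. Both parities therefore yield $\dim Q_n(4) = 2^{n-1}$, as claimed.

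There is essentially no obstacle to this argument: the corollary is a one-line consequence of Theorem \ref{thm:ising} together with the Wedderburn decomposition, and all the substantive work has been absorbed into the proof of that theorem. The only mild sanity check worth flagging is the numerical coincidence that the even and odd cases produce the same clean power of $2$, which is automatic from the formulas of Theorem \ref{thm:ising} but offers a small cross-check that the classification and dimension counts there are internally consistent.
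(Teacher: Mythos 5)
Your argument is exactly the paper's: semisimplicity gives the Wedderburn decomposition, the dimension is the sum of squares of simple module dimensions, and Theorem~\ref{thm:ising} supplies those dimensions; you merely write out the arithmetic the paper leaves implicit. Correct, same approach.
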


\subsection{The Clifford algebra and $Q_n$} Let $U$ be a complex vector space of finite dimension $n$, with a non-degenerate 
symmetric bilinear form $\langle -,-\rangle$. Then $U$ has an orthonormal basis $u_1,\dots,u_n$, which enjoys 
the property that $\la u_i,u_j\ra=\delta_{ij}$. If  $\gamma_i=\frac{1}{\sqrt {2}}u_i$ for $i=1,\dots,n$, then for any $i,j$, 
\be\label{eq:or}
\la \gamma_i,\gamma_j\ra=\frac{1}{2}\delta_{i,j}.
\ee

The Clifford algebra $\cC_n=\cC(U,\la-,-\ra)$ (for generalities about Clifford algebras we refer the reader to \cite{DG}) is defined as
\be\label{eq:cl}
\cC_n=\frac{T(U)}{I},
\ee
where $T(U)=\oplus_{i=0}^\infty U^{\ot i}$ is the free associative $\C$-algebra (or tensor algebra) on $U$, and $I$ is the ideal
of $T(U)$ generated by all elements of the form $u\ot u-\la u,u\ra 1$ ($u\in U$). This last relation may equivalently be written 
(omitting the $\ot$ in the multiplication)
\be\label{eq:clrel}
uv+vu=2\la u,v\ra 1.
\ee

The algebra $\cC_n$ is evidently generated by any basis of $U$, and hence by \eqref{eq:or} and \eqref{eq:clrel} has the presentation
\be\label{eq:prescl}
\cC_n=\la \gamma_1,\dots,\gamma_n\mid \gamma_i\gamma_j +  \gamma_j\gamma_i=\delta_{ij}\text{ for }1\leq i,j\leq n \ra.
\ee

For any subset $J=\{j_1<j_2<\dots<j_p\}\subseteq \{1,\dots,n\}$, write $\gamma_J=\gamma_{j_1}\gamma_{j_2}\dots\gamma_{j_p}$.
It is evident that $\{\gamma_J\mid J\subseteq \{1,2,\dots,n\}\}$ is a basis of $\cC_n$ which is therefore $\Z_2$-graded (since the relations
are in the even subalgebra of the tensor algebra), 
the even (resp. odd) subspace being spanned by those $\gamma_J$ with $|J|$ even (resp. odd).

The following statement is now clear.

\begin{proposition}\label{prop:clf}
Th Clifford algebra $\cC(U,\la-,-\ra)$ has dimension $2^n$, where $n=\dim (U)$. Its even subalgebra $\cC_n^0$ has dimension $2^{n-1}$.
\end{proposition}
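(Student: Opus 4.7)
My plan for Proposition \ref{prop:clf} has two parts, corresponding to spanning and linear independence; once those are established, the dimension counts are combinatorial.

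\textbf{Spanning.} First I would justify that the $2^n$ elements $\{\gamma_J \mid J\subseteq\{1,\dots,n\}\}$ span $\cC_n$. The tensor algebra $T(U)$ is spanned by monomials $\gamma_{i_1}\gamma_{i_2}\cdots\gamma_{i_p}$ for arbitrary sequences. Using the relation $\gamma_i\gamma_j=-\gamma_j\gamma_i+\delta_{ij}$, any adjacent transposition of unequal indices replaces a monomial by $\pm$ a monomial of the same length (reordered) plus a monomial of length $p-2$. Hence iterated use of this relation allows one to express any monomial as a linear combination of monomials with strictly increasing indices. Finally, using $\gamma_i^2=\tfrac12$, any repeated index can be eliminated. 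This shows that the $\gamma_J$ with $J\subseteq\{1,\dots,n\}$ span $\cC_n$, and so $\dim\cC_n\leq 2^n$.

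\textbf{Linear independence.} This is the main obstacle: a priori the defining relations could force unexpected linear dependences among the $\gamma_J$. I would resolve this by exhibiting an explicit $2^n$-dimensional representation on which the $\gamma_J$ act as linearly independent operators, forcing $\dim\cC_n\geq 2^n$ and hence equality. The cleanest choice is the exterior algebra $\Lambda^*(\C^n)$: pick a basis $e_1,\dots,e_n$ and define operators
\be
\rho(\gamma_i)=\tfrac{1}{\sqrt{2}}\bigl(\epsilon_i+\iota_i\bigr),
\ee
where $\epsilon_i(\omega)=e_i\wedge\omega$ is exterior multiplication and $\iota_i$ is the derivation determined by $\iota_i(e_j)=\delta_{ij}$. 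A direct check shows $\rho(\gamma_i)\rho(\gamma_j)+\rho(\gamma_j)\rho(\gamma_i)=\delta_{ij}\,\id$, so by the universal property $\rho$ extends to a representation of $\cC_n$ on $\Lambda^*(\C^n)$, which has dimension $2^n$. Applying $\rho(\gamma_J)$ to the vacuum $1\in\Lambda^0$ produces (up to a nonzero scalar) the basis vector $e_J=e_{j_1}\wedge\dots\wedge e_{j_p}$ of $\Lambda^*(\C^n)$, and these are linearly independent as $J$ ranges over all subsets. Consequently the $\rho(\gamma_J)$, and therefore the $\gamma_J$, are linearly independent, so $\{\gamma_J\}$ is indeed a basis and $\dim\cC_n=2^n$.

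\textbf{Even part.} The relations $uv+vu=2\langle u,v\rangle 1$ are homogeneous of even degree in the generators, so the $\Z_2$-grading on $T(U)$ by parity of length descends to $\cC_n$. Under this grading $\gamma_J$ is homogeneous of degree $|J|\bmod 2$, so $\cC_n^0$ has a basis indexed by subsets $J$ of even cardinality. The number of such subsets of $\{1,\dots,n\}$ is $\sum_{k\text{ even}}\binom{n}{k}=2^{n-1}$, giving $\dim\cC_n^0=2^{n-1}$. The principal difficulty in the whole argument is the linear independence, which is why I would reduce it to a concrete faithful module rather than attempt a Diamond-lemma-style rewriting argument on the relations directly.
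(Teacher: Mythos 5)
Your proof is correct and complete. The paper itself gives essentially no proof of this proposition: it simply asserts that $\{\gamma_J \mid J\subseteq\{1,\dots,n\}\}$ is ``evidently'' a basis of $\cC_n$ (pointing to Garling's textbook for generalities on Clifford algebras) and then declares the dimension count ``clear''. Your argument supplies exactly the standard justification that the paper elides: the spanning step by rewriting arbitrary monomials in the $\gamma_i$ into increasing-index form using the anticommutation relation and $\gamma_i^2=\tfrac12$, and the linear-independence step by constructing the faithful $2^n$-dimensional Fock-space (spinor) representation on $\Lambda^*(\C^n)$ with $\rho(\gamma_i)=\tfrac{1}{\sqrt 2}(\epsilon_i+\iota_i)$ and evaluating the $\rho(\gamma_J)$ on the vacuum to separate them. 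The even-part count $\sum_{k\text{ even}}\binom{n}{k}=2^{n-1}$, together with the observation that the defining relations live in even degree so the $\Z_2$-grading descends, is also correct and matches the paper's brief remark. One small stylistic point: in the spanning argument the two cases $i\neq j$ (a pure sign flip with no length-$(p-2)$ term) and $i=j$ (a pure scalar with no same-length term) are distinct, whereas your sentence bundles them into one generic statement; the logic is nonetheless sound. Your choice to prove independence via a concrete module rather than a Diamond Lemma rewriting argument is both cleaner and closer in spirit to the operator-algebraic context the paper works in.
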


The next theorem is the main result of this section; it asserts that the Ising algebra is isomorphic to
the even subalgebra of the Clifford algebra.

\begin{theorem}\label{thm:qcl}
We continue to assume $\ell=4$ and that $q=-\exp(\frac{\pi i}{4})$. Other notation is as above.
For $n=3,4,\dots$ there are surjective homomorphisms $\phi_n:\TL_n(q)\to \cC_n^0$ which induce 
isomorphisms $\ol\phi_n:Q_n\overset{\simeq}{\lr}\cC^0_n$. 
\end{theorem}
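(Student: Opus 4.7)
The plan is to construct $\phi_n$ explicitly via a Jordan--Wigner style ansatz, verify from the Clifford relations that it respects the Temperley--Lieb relations, show it is surjective, check that it annihilates the Jones--Wenzl idempotent $E_3$ (so that it factors through $Q_n$), and conclude by comparing dimensions using Corollary \ref{cor:dimq-4} and Proposition \ref{prop:clf}.

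Concretely, for $1\leq i\leq n-1$ I would set $A_i := 2\sqrt{-1}\,\gamma_i\gamma_{i+1}\in\cC_n^0$ and define
\[
\phi_n(f_i)=\frac{1}{\sqrt{2}}(1+A_i).
\]
Three facts about the $A_i$ drive the verification: (i) $A_i^2=1$, from $\gamma_i^2=\gamma_{i+1}^2=\frac{1}{2}$ and $\gamma_i\gamma_{i+1}=-\gamma_{i+1}\gamma_i$; (ii) $A_iA_{i\pm 1}+A_{i\pm 1}A_i=0$, as each summand reduces, via $\gamma_{i+1}^2=\frac{1}{2}$ (resp. $\gamma_i^2=\frac{1}{2}$), to $\pm 2\gamma_i\gamma_{i+2}$ (resp. $\pm 2\gamma_{i-1}\gamma_{i+1}$); and (iii) $A_iA_j=A_jA_i$ when $|i-j|\geq 2$, since the four Clifford generators involved are then distinct. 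From (i), $\phi_n(f_i)^2=\frac{1}{2}(1+A_i)^2=\sqrt{2}\phi_n(f_i)$. Facts (i) and (ii) together yield $A_iA_{i\pm 1}A_i=-A_{i\pm 1}$, and a short expansion then reduces $(1+A_i)(1+A_{i\pm 1})(1+A_i)$ to $2(1+A_i)$, giving the braid-like TL relation $\phi_n(f_i)\phi_n(f_{i\pm 1})\phi_n(f_i)=\phi_n(f_i)$. Fact (iii) gives the far-commutation relation. Hence $\phi_n$ extends to a well-defined $\C$-algebra homomorphism.

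Surjectivity is immediate: $\cC_n^0$ is generated as a subalgebra by $\{\gamma_i\gamma_{i+1}\mid 1\leq i\leq n-1\}$, since any $\gamma_i\gamma_j$ with $i<j$ telescopes to a scalar multiple of a product of adjacent such pairs using $\gamma_k^2=\frac{1}{2}$. To see that $\phi_n$ factors through $Q_n$, I would compute $\phi_n(E_3)$ directly from the formula $E_3=1+f_1f_2+f_2f_1-\sqrt{2}(f_1+f_2)$ of the Example: since $A_1A_2+A_2A_1=0$, the symmetric combination $\phi_n(f_1f_2+f_2f_1)$ collapses to $1+A_1+A_2$, while $\sqrt{2}\phi_n(f_1+f_2)=2+A_1+A_2$, so $\phi_n(E_3)=0$. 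By Remark \ref{rem:zero}, $J_4=\TL_nE_3\TL_n$, hence $J_4\subseteq\ker\phi_n$ and $\phi_n$ descends to a surjection $\ol\phi_n:Q_n\to\cC_n^0$. Since $\dim Q_n=2^{n-1}$ by Corollary \ref{cor:dimq-4} and $\dim\cC_n^0=2^{n-1}$ by Proposition \ref{prop:clf}, $\ol\phi_n$ is an isomorphism.

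The main obstacle is conceptual rather than technical: one must guess the correct lift of the Jones projections into the even Clifford algebra. The ansatz $\phi_n(f_i)=\frac{1}{\sqrt{2}}(1+2\sqrt{-1}\,\gamma_i\gamma_{i+1})$ is motivated by the Jordan--Wigner transformation familiar from the statistical mechanics of the two-dimensional Ising model. Once this choice is made, every step reduces to short, careful sign-tracking inside $\cC_n^0$, with the dimension match providing the final, clean conclusion.
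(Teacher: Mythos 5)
Your proof is correct and follows essentially the same route as the paper: the same ansatz $\phi_n(f_i)=\tfrac{1}{\sqrt 2}(1+2i\gamma_i\gamma_{i+1})$, the same observation that the image is $\cC_n^0$, the same verification that $E_3\in\ker\phi_n$, and the same dimension count via Corollary~\ref{cor:dimq-4} and Proposition~\ref{prop:clf}. The only difference is that you carry out directly the Clifford-algebra computations (TL relations for the $A_i$, vanishing of $\phi_n(E_3)$) that the paper instead cites from Koo--Saleur and Connes--Evans; your computations are all correct.
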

\begin{proof} Define $\phi_n(f_j)=\frac{1}{\sqrt 2}(1+2i\gamma_j\gamma_{j+1})$. It was remarked by 
Koo and Saleur \cite[\S 3.1 eq. (3.2)]{KS} (see also \cite{CE}) that the  $\phi_n(f_j)$ satisfy the relations
\eqref{eq:reltl} in $\cC_n$, and therefore that $\phi_n$ defines a homomorphism from $\TL_n$ to $\cC_n$,
and further that $E_3\in\ker(\phi_n)$. 

It is evident that the image of $\phi_n$ is $\cC_n^0$, and therefore that $\ol\phi_n:Q_n\to \cC_n^0$ is surjective.
But by Cor. \ref{cor:dimq-4} and Prop. \ref{prop:clf} these two algebras have the same dimension, whence
$\ol\phi_n$ is an isomorphism.
\end{proof}

\subsection{Canonical trace} Let $\TL_n(q)$ be the $n$-string Temperley-Lieb algebra as above, and assume $\delta:=-(q+q\inv)\neq 0$
is invertible. 
The canonical Jones trace $\tr_n$ on $\TL_n(q)$ is characterised by the properties 
\be\label{eq:proptr}
\begin{aligned}
\tr_n(1)=&1,\\
\tr_{n+1}(xf_n)=&\delta\inv\tr_n(x)\text{ for }x\in\TL_n(q).\\
\end{aligned}
\ee

As has been pointed out above \eqref{eq:qss}, this trace descends to a non-degenerate trace on $Q_n$, satisfying similar properties.

\begin{proposition}
There is a canonical trace $\ol\tr_n$ on $\cC_n^0$, given by taking the constant term (coefficient of $1$) of any of its elements.
This trace corresponds to the Jones trace above in the sense that for $x\in Q_n$, $\tr_n(x)=\ol\tr_n(\phi(x))$. It is therefore non-degenerate.
\end{proposition}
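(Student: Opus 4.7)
The plan is to exhibit the equality $\tr_n = \ol\tr_n\circ\phi_n$ as an equality of two functionals on $\TL_n(q)$ by appealing to the uniqueness of the Jones trace characterised by \eqref{eq:proptr}, and then to read off non-degeneracy from that of $\ol\tr_n$ on the Clifford side, using the isomorphism $\ol\phi_n$ of \thmref{thm:qcl}.

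First I would check directly that the constant-term map $\ol\tr_n:\cC_n^0\to\C$ is a trace. Using the basis $\{\gamma_J\mid J\subseteq\{1,\dots,n\},\,|J|\text{ even}\}$ and the anti-commutation relation \eqref{eq:prescl}, any product $\gamma_J\gamma_K$ reduces to a scalar multiple of $\gamma_{J\triangle K}$, and therefore has nonzero constant term only when $K=J$. For $K=J$ the identity $\ol\tr_n(\gamma_J\gamma_K)=\ol\tr_n(\gamma_K\gamma_J)$ is trivial, and for $K\neq J$ both sides vanish. Hence $\ol\tr_n(ab)=\ol\tr_n(ba)$ on basis elements, and so on all of $\cC_n^0$. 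The same computation shows that the Gram matrix $(\ol\tr_n(\gamma_J\gamma_K))_{J,K}$ is diagonal with entries of the form $\pm 2^{-|J|}$, so $\ol\tr_n$ is non-degenerate on $\cC_n^0$.

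Next I would verify that $\ol\tr_n\circ\phi_n$ satisfies the two defining properties \eqref{eq:proptr} of the Jones trace on $\TL_n(q)$. The normalisation $\ol\tr_n(\phi_n(1))=1$ is immediate. For the Markov property, note that the inclusion $\TL_n\subseteq\TL_{n+1}$ is intertwined by $\phi$ with the natural inclusion $\cC_n^0\subseteq\cC_{n+1}^0$, which preserves the constant term. For $x\in\TL_n(q)$, one has
\[
\phi_{n+1}(xf_n)=\phi_n(x)\cdot\tfrac{1}{\sqrt 2}\bigl(1+2i\gamma_n\gamma_{n+1}\bigr),
\]
and since $\phi_n(x)\in\cC_n^0$ involves only $\gamma_1,\dots,\gamma_n$, every term in $\phi_n(x)\gamma_n\gamma_{n+1}$ retains a factor of $\gamma_{n+1}$ and thus has vanishing constant term. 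Therefore
\[
\ol\tr_{n+1}(\phi_{n+1}(xf_n))=\tfrac{1}{\sqrt 2}\,\ol\tr_n(\phi_n(x))=\delta\inv\ol\tr_n(\phi_n(x)),
\]
exactly as required. By the uniqueness of a trace on $\TL_n(q)$ satisfying \eqref{eq:proptr}, this forces $\tr_n=\ol\tr_n\circ\phi_n$ on $\TL_n(q)$. Both sides vanish on $J_\ell=\ker\phi_n$ (the left side because $J_\ell=\Rad(\tr_n)$ by \propref{prop:rad}), so they descend to $Q_n$, yielding $\tr_n(x)=\ol\tr_n(\phi_n(x))$ as claimed. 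Non-degeneracy of $\tr_n$ on $Q_n$ then follows by transport along the isomorphism $\ol\phi_n$ of \thmref{thm:qcl}.

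The main obstacle I anticipate is the verification of the Markov recursion: one has to be careful about which basis monomials in $\phi_n(x)\cdot(1+2i\gamma_n\gamma_{n+1})$ contribute a nonzero constant term after full reduction to the basis $\{\gamma_J\}$ of $\cC_{n+1}^0$. The key observation that cuts through this is simply that $\gamma_{n+1}$ does not appear in $\phi_n(x)$, so it survives in every term of the cross piece and kills its constant term; once that is in hand, the remaining computations are routine.
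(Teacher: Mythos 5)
Your proof is correct and takes essentially the same route as the paper, which also indicates that one should verify $\ol\tr_n$ satisfies the analogue of the characterisation \eqref{eq:proptr} in $\cC_n^0$ and appeal to uniqueness. Your write-up merely spells out the details (the tracial property, the Markov step via the observation that $\gamma_{n+1}$ cannot cancel, and non-degeneracy from the diagonal Gram matrix) that the paper leaves as an exercise.
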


The proof is easy, and consists in showing that $\ol\tr_n$ satisfies the analogue of \eqref{eq:proptr} in $\cC_n^0$.

\subsection{The spinor representations of $\fso(n)$} We give yet another interpretation of the algebra in terms 
of the spin representations of $\fso(n)$. Let $\SO(n)$ be the special orthogonal group of the space $(U,\la-,-\ra)$
above. Its Lie algebra has basis the set of matrices (with respect to the orthogonal basis $(\gamma_i)$) 
$J_{ij}:=E_{ij}-E_{ji}$, $1\leq i<j\leq n$, where the $E_{ij}$ are the usual matrix units. This basis of $\fso(n)$
satisfies the commutation relations
\be\label{eq:com-e}
[J_{ij},J_{kl}]=\delta_{jk}J_{il}-\delta_{jl}J_{ik}-\delta_{ik}J_{jl}+\delta_{il}J_{jk}.
\ee
\begin{proposition}\label{prop:son}
For $n\geq 2$, there are surjective homomorphisms $\psi_n:\U(\fso(n))\to \cC_n^0\cong Q_n$, such that 
$\psi_n(J_{ij})=\omega_{ij}:=\frac{1}{2}(\gamma_i\gamma_j-\gamma_j\gamma_i)$. The irreducible spin representations
of $\fso(n)$ are realised on the simple $Q_n$-modules $L_0$ and $L_2$ when $n$ is even and on $L_1$ when $n$ is odd.
\end{proposition}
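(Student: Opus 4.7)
The plan is to use the classical embedding of $\fso(n)$ into the even Clifford algebra, combined with the isomorphism $\cC_n^0\cong Q_n$ of Theorem \ref{thm:qcl}. The first step is to check that the assignment $J_{ij}\mapsto \omega_{ij}$ is well-defined, i.e.\ that the $\omega_{ij}$ satisfy the commutation relations \eqref{eq:com-e} in $\cC_n^0$. Since $\gamma_a\gamma_b+\gamma_b\gamma_a=\delta_{ab}$, we have $\omega_{ij}=\gamma_i\gamma_j$ whenever $i\neq j$, and a direct case-by-case calculation (according to how many of the indices $i,j,k,l$ coincide), using only the Clifford relations and the normalisation $\gamma_a^2=\tfrac12$, yields
\[
[\omega_{ij},\omega_{kl}]=\delta_{jk}\omega_{il}-\delta_{jl}\omega_{ik}-\delta_{ik}\omega_{jl}+\delta_{il}\omega_{jk}.
\]
By the universal property of $\U(\fso(n))$, this extends to an algebra homomorphism $\psi_n:\U(\fso(n))\to \cC_n^0$.

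For surjectivity, note that the image of $\psi_n$ contains every $\gamma_i\gamma_j$ with $i<j$. For an arbitrary even subset $J=\{j_1<\dots<j_{2k}\}\subseteq\{1,\dots,n\}$, the basis element $\gamma_J$ factors as $(\gamma_{j_1}\gamma_{j_2})(\gamma_{j_3}\gamma_{j_4})\cdots(\gamma_{j_{2k-1}}\gamma_{j_{2k}})$ and thus lies in the subalgebra generated by the $\omega_{ij}$. By Proposition \ref{prop:clf} the $\gamma_J$ with $|J|$ even form a basis of $\cC_n^0$, so $\psi_n$ is surjective; composing with the isomorphism $\cC_n^0\cong Q_n$ of Theorem \ref{thm:qcl} gives the asserted surjection $\U(\fso(n))\to Q_n$.

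To identify the simple $Q_n$-modules as spin representations, I would invoke the classical structure theory of Clifford algebras. For $n$ even, $\cC_n\cong M_{2^{n/2}}(\C)$ and the even part splits as $\cC_n^0\cong M_{2^{n/2-1}}(\C)\oplus M_{2^{n/2-1}}(\C)$, whose two irreducibles are, by definition, the two half-spin representations of $\fso(n)$ pulled back along $\psi_n$. For $n$ odd, $\cC_n\cong M_{2^{(n-1)/2}}(\C)^{\oplus 2}$ while $\cC_n^0\cong M_{2^{(n-1)/2}}(\C)$ has a single irreducible module of dimension $2^{(n-1)/2}$, which is the (unique) spin representation of $\fso(n)$. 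This matches Theorem \ref{thm:ising} exactly: $\dim L_0=\dim L_2=2^{n/2-1}$ in the even case, and $\dim L_1=2^{(n-1)/2}$ in the odd case, so under $\phi_n^{-1}\circ\psi_n$ the half-spin representations correspond to $L_0,L_2$ and the spin representation to $L_1$.

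The main (and rather mild) obstacle is just the bookkeeping in the commutator computation of Step 1, where one must keep track of the non-standard normalisation $\gamma_a^2=\tfrac12$; Step 2 is linear algebra, and Step 3 is essentially a citation to the well-known structure of Clifford algebras, with the dimension match from Theorem \ref{thm:ising} making the identification of $L_0,L_2,L_1$ with the (half-)spin representations automatic.
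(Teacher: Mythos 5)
Your proposal is correct and follows essentially the same route as the paper, which only gives a brief sketch: verify the $\omega_{ij}$ satisfy the $\fso(n)$ commutation relations (your case analysis with $\gamma_a^2=\tfrac12$ is the "straightforward" verification the paper alludes to), then observe $\omega_{ij}=\gamma_i\gamma_j$ generates $\cC_n^0$. The paper omits any justification of the last sentence of the statement, treating the identification of the simple $\cC_n^0$-modules with the (half-)spin representations as well known; your Step 3, invoking the standard structure $\cC_n^0\cong M_{2^{n/2-1}}(\C)^{\oplus 2}$ (resp. $M_{2^{(n-1)/2}}(\C)$) and matching dimensions against Theorem~\ref{thm:ising}, is a clean way to fill that in.
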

\begin{proof} As this is well known, we give merely a sketch of the argument.
To show that $\psi_n$ defines a homomorphism, it suffices to observe that the $\omega_{ij}$ satisfy the same commutation
relations \eqref{eq:com-e} as the $J_{ij}$, and this is straightforward. The surjectivity of $\psi_n$ is evident from the observation
that $\omega_{ij}=\gamma_i\gamma_j$, which shows that the image of $\psi_n$ contains the whole of $\cC_n^0\simeq Q_n$.
\end{proof}

\section{The algebra $Q_n(\ell)$ for $\ell>4$.} The Definition \ref{def:q} defines a quotient $Q_n=Q_n(\ell)$ of $\TL_n(q)$ which depends 
on the order $\ell=|q^2|$. As we shall be considering various $\ell$, we henceforth consistently denote this quotient by $Q_n(\ell)$. Thus the
Ising algebra $Q_n$ of the last section is denoted $Q_n(4)$. If we take $q^2$ to be $\exp(\frac{2\pi i}{\ell})$, then $q=\pm\exp(\frac{\pi i}{\ell})$,
and the constant $\delta$ such that $f_j^2=\delta f_j$ may be taken to be
$\delta=\exp(\frac{\pi i}{\ell})+\exp(-\frac{\pi i}{\ell})=2\cos(\frac{\pi}{\ell})$. In this section we study the simple modules $L_0(n),
L_1(n),\dots,L_{\ell-2}(n)$ of $Q_n(\ell)$. Note that $L_i(n)$ makes sense only if $n\equiv i\text{ (mod } 2)$.
To illustrate the method, we begin by discussing the case $\ell=5$.

\subsection{The algebras $Q_n(5)$} An elementary study of the $g$ function on 
$\N'=\{n\in\N\mid n\not\equiv -1\text{ (mod }\ell)\}$ leads, just as in \eqref{eq:dimli4} to the following formulae for 
the dimension functions $\ell_i$.
\be\label{eq:dimli5}
\begin{aligned}
\ell_0=&\sum_{t\equiv 0\text{(mod }10)}w_t-\sum_{t\equiv 8\text{(mod }10)}w_t
=&\sum_{t\equiv 0\text{(mod }10)}(w_t-w_{t+8});\\
\ell_1=&\sum_{t\equiv 1\text{(mod }10)}w_t-\sum_{t\equiv 7\text{(mod }10)}w_t
=&\sum_{t\equiv 1\text{(mod }10)}(w_t-w_{t+6});\\
\ell_2=&\sum_{t\equiv 2\text{(mod }10)}w_t-\sum_{t\equiv 6\text{(mod }10)}w_t
=&\sum_{t\equiv 2\text{(mod }10)}(w_t-w_{t+4});\\
\ell_3=&\sum_{t\equiv 3\text{(mod }10)}w_t-\sum_{t\equiv 5\text{(mod }10)}w_t
=&\sum_{t\equiv 3\text{(mod }10)}(w_t-w_{t+2}).\\
\end{aligned}
\ee

We next make repeated use of the recurrence \eqref{eq:recw2} to evaluate the right sides of the expressions in \eqref{eq:dimli5}.

Note that 
\[
\begin{aligned}
w_t(n+1)-w_{t+2}(n+1)=& w_{t-1}(n)+w_{t+1}(n)-w_{t+1}(n)-w_{t+3}(n)\\
=&w_{t-1}(n)-w_{t+3}(n),\\
\end{aligned}
\]
and summing both sides over $t\equiv 3\text{(mod }10)$, we obtain

\be\label{eq:5-1}
\ell_3(n+1)=\ell_2(n)\text{ for even }n.
\ee

Similarly, 
\[
\begin{aligned}
w_t(n+1)-w_{t+4}(n+1)=& w_{t-1}(n)+w_{t+1}(n)-w_{t+3}(n)-w_{t+5}(n)\\
=&w_{t-1}(n)-w_{t+3}(n),\\
\end{aligned}
\]
and summing over $t\equiv 2\text{(mod }10)$, we obtain

\be\label{eq:5-2}
\ell_2(n+1)=\ell_1(n)+\ell_3(n)\text{ for odd }n.
\ee

Proceeding further along these lines yields the relations
\be\label{eq:5-3}
\ell_1(n+1)=\ell_0(n)+\ell_2(n)\text{ for even }n.
\ee

and finally

\be\label{eq:5-4}
\ell_0(n+1)=\ell_1(n)\text{ for odd }n.
\ee

Now let $M=\begin{pmatrix} 2 & 1\\1 & 1\\ \end{pmatrix}.$ 
Combining the equations \eqref{eq:5-1}, \eqref{eq:5-2},\eqref{eq:5-3} and \eqref{eq:5-4}, we obtain
\be\label{eq:recq5}
\begin{aligned}
\begin{pmatrix}
\ell_2(n+2)\\ \ell_0(n+2)\\
\end{pmatrix}
&=M\begin{pmatrix}
\ell_2(n)\\ \ell_0(n)\\
\end{pmatrix}\text{ for even }n\text{ and }\\
\begin{pmatrix}
\ell_1(n+2)\\ \ell_3(n+2)\\
\end{pmatrix}
&=M\begin{pmatrix}
\ell_1(n)\\ \ell_3(n)\\
\end{pmatrix}\text{ for odd }n.\\
\end{aligned}
\ee

Now for $n=1,2,3,\dots$, define integers $a_n$ and $b_n$ by 
\be\label{eq:mpn}
\begin{pmatrix} 2 & 1\\1 & 1\\ \end{pmatrix}^n=
\begin{pmatrix}
a_n+b_n & a_n\\a_n & b_n\\
\end{pmatrix}.
\ee

\begin{remark}
Note that the sequence $b_1,a_1,b_2, a_2,\dots=1,1,2,3,\dots$ is the Fibonacci sequence. Let us write 
this sequence $F_1,F_2,\dots$. Then for $n\geq 1$, $b_n=F_{2n-1}$ and $a_n=F_{2n}$. Thus we may write
\be\label{eq:fib1}
\begin{pmatrix} 2 & 1\\1 & 1\\ \end{pmatrix}^n=
\begin{pmatrix}
F_{2n+1} & F_{2n}\\F_{2n} & F_{2n-1}\\
\end{pmatrix}.
\ee
\end{remark}

\begin{proposition}\label{prop:q5}
(1) The dimensions of the simple $Q_n(5)$-modules are given by the following formulae.
For $n=1,2,3,\dots$ write $F_n$ for the $n^{\text{th}}$ term in the Fibonacci sequence. Then
\be
\begin{pmatrix}
\ell_1(2n+1) & \ell_2(2n)\\ \ell_3(2n+1) & \ell_0(2n)\\
\end{pmatrix}
= M^n.
\ee

That is, $\ell_0(2n)=b_n=F_{2n-1}$, $\ell_2(2n)=a_n=F_{2n}$, $\ell_1(2n+1)=a_n+b_n=F_{2n+1}$ and $\ell_3(2n+1)=a_n=F_{2n}$.

(2) The dimensions of the semisimple algebras $Q_n(5)$ are given by 
\[
\dim(Q_n(5))=F_{2n-1},
\]
where $F_n$ is the $n^\text{th}$ Fibonacci number.
\end{proposition}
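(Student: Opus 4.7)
The plan is to use the recursions \eqref{eq:recq5} derived above, together with suitably chosen base cases, to establish the closed matrix identity in part (1), and then to deduce part (2) from semisimplicity together with an elementary matrix identity obtained by comparing entries of $M^m M^m = M^{2m}$.

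For part (1), I would first compute the base cases directly from \eqref{eq:dimli5} and Theorem \ref{thm:dimw}. Only finitely many terms contribute at $n=2$ and $n=3$, and one obtains
\[
\begin{pmatrix} \ell_2(2) \\ \ell_0(2) \end{pmatrix} = \begin{pmatrix} 1 \\ 1 \end{pmatrix} = M\begin{pmatrix} 0 \\ 1\end{pmatrix}, \qquad
\begin{pmatrix} \ell_1(3) \\ \ell_3(3) \end{pmatrix} = \begin{pmatrix} 2 \\ 1 \end{pmatrix} = M\begin{pmatrix} 1 \\ 0\end{pmatrix}.
\]
A trivial induction on $n$ using \eqref{eq:recq5} then gives
\[
\begin{pmatrix} \ell_2(2n) \\ \ell_0(2n) \end{pmatrix} = M^n \begin{pmatrix} 0 \\ 1 \end{pmatrix},\qquad
\begin{pmatrix} \ell_1(2n+1) \\ \ell_3(2n+1) \end{pmatrix} = M^n \begin{pmatrix} 1 \\ 0 \end{pmatrix},
\]
which are, respectively, the second and first columns of $M^n$. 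Reading off the entries via \eqref{eq:fib1} yields the stated Fibonacci expressions and the matrix identity claimed in~(1).

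For part (2), the semisimplicity of $Q_n(5)$ (see \eqref{eq:qss}) combined with Theorem \ref{thm:zero} gives $\dim Q_n(5) = \ell_0(n)^2 + \ell_2(n)^2$ when $n=2m$ is even, and $\dim Q_n(5) = \ell_1(n)^2 + \ell_3(n)^2$ when $n=2m+1$ is odd. Substituting the formulas from part~(1) produces $F_{2m-1}^2 + F_{2m}^2$ or $F_{2m}^2 + F_{2m+1}^2$ respectively, each of which I would identify as a diagonal entry of $M^m \cdot M^m = M^{2m}$. Precisely, by \eqref{eq:fib1} the $(2,2)$-entry of $M^{2m}$ is $F_{4m-1}$ and the $(1,1)$-entry is $F_{4m+1}$; comparing with the direct expansion of the matrix product gives $F_{2m-1}^2+F_{2m}^2 = F_{4m-1}$ and $F_{2m}^2+F_{2m+1}^2 = F_{4m+1}$. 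In both parities this common value equals $F_{2n-1}$, as required. The only point needing any attention is the identification of the correct base vectors for the iteration in part~(1); beyond this the argument is routine.
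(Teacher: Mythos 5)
Your proposal is correct and takes essentially the same approach as the paper: induction via the recurrence \eqref{eq:recq5} for part (1), and sum of squares of simple dimensions combined with reading off entries of $M^{2m}=(M^m)^2$ via \eqref{eq:mpn} and \eqref{eq:fib1} for part (2).
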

\begin{proof}
The proof of the first statement is a simple induction on $n$ using the recurrences above, the values for small $n$ being easily calculated.

The second statement follows by taking the sum of the squares of the dimensions of the simple $Q_n(5)$-modules. Specifically,
we have $\dim(Q_{2n}(5))=b_n^2+a_n^2$. But from \eqref{eq:mpn}
\be\label{eq:fib2}
\begin{pmatrix} 2 & 1\\1 & 1\\ \end{pmatrix}^{2n}=
\begin{pmatrix}
a_n+b_n & a_n\\a_n & b_n\\
\end{pmatrix}^2=
\begin{pmatrix}
a_{2n}+b_{2n} & a_{2n}\\a_{2n} & b_{2n}\\
\end{pmatrix},
\ee
so that $a_n^2+b_n^2=b_{2n}$. A similar argument for $\dim(Q_{2n+1}(5))$ shows that
\be
\begin{aligned}
\dim(Q_{2n}(5))=& b_{2n}\text{ and }\\
\dim(Q_{2n+1}(5))=& a_{2n}+b_{2n}.\\
\end{aligned}
\ee 
In view of  \eqref{eq:fib1} and \eqref{eq:fib2}, the proof is complete.
\end{proof}

\subsection{The general case} We now consider $Q_n(\ell)$, where $\ell=|q^2|\geq 4$. By Theorem \ref{thm:zero}, the 
simple $Q_n(\ell)$-modules are those $L_t=L_t(n)$ where $0\leq t\leq \ell-2$ and $i\equiv n\text {(mod }2)$. As usual, we write
$\ell_i(n)=\dim(L_i(n))$ and regard $\ell_i$ as a function on the positive integers. We shall determine the $\ell_t$ recursively, as in the 
examples $\ell=4,5$ which have been described above.

\noindent{\bf Notation.} The following notation will be convenient. Write $\cR=\{0,1,2,\dots,\ell-2\}$,
$\ol\cR=\{1,2,3,\dots,\ell-1\}$ and $t\mapsto \ol t$ for the bijection $\cR\to\ol\cR$ given by $\ol t=\ell-1-t$.

We begin with two elementary observations concerning the $g$ function (see Theorem \ref{thm:tlcomp}).

\begin{lemma}\label{lem:g}
(1) If $i\in\N'$ and $i\equiv t\text {(mod }\ell)$, then 
\be\label{eq:g1}
g(i)=i+2\ol t.
\ee
(2) For any $i\in\N'=\{j\in\N\mid j\not\equiv -1\text{(mod }\ell)\}$, we have 
\be\label{eq:g2}
g^2(i)=i+2\ell.
\ee
\end{lemma}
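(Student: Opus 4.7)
The plan is to prove both parts by unwinding the definition of $g$ from Theorem \ref{thm:tlcomp}. Recall that for $t = a\ell + b \in \N'$ with $0 \leq b \leq \ell-2$, we have $g(t) = (a+1)\ell + (\ell - 2 - b)$.

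For part (1), I would simply write $i = a\ell + t$ with $t \in \cR = \{0, 1, \ldots, \ell-2\}$ (which is legitimate since $i \in \N'$ forces the residue to lie in $\cR$). Then
\[
g(i) = (a+1)\ell + (\ell - 2 - t) = a\ell + t + 2(\ell - 1 - t) = i + 2\bar{t},
\]
establishing the formula.

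For part (2), I would apply (1) twice. First, from the computation above, $g(i) = (a+1)\ell + (\ell-2-t)$, and since $0 \leq \ell-2-t \leq \ell-2$, this exhibits $g(i) \in \N'$ with residue $s := \ell - 2 - t$ modulo $\ell$. Hence by part (1),
\[
g^2(i) = g(i) + 2\bar{s} = g(i) + 2(\ell - 1 - s) = g(i) + 2(t+1).
\]
Substituting $g(i) = i + 2\bar{t} = i + 2(\ell - 1 - t)$ gives
\[
g^2(i) = i + 2(\ell - 1 - t) + 2(t+1) = i + 2\ell,
\]
as required.

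There is no real obstacle here; the statement is essentially a bookkeeping exercise. The only mild point to verify is that the residue $s = \ell - 2 - t$ of $g(i)$ lies in $\cR$, so that part (1) is applicable a second time, and this is immediate since $0 \leq t \leq \ell-2$ forces $0 \leq s \leq \ell-2$. The cancellation of the $t$-dependence in $g^2(i)$ is the key identity that makes the shift by $2\ell$ uniform across all residue classes.
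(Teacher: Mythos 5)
Your proof is correct and is exactly the direct computational verification one expects; the paper itself states the lemma as an ``elementary observation'' and gives no proof, so there is nothing to contrast. The only detail worth noting---that the residue $s=\ell-2-t$ of $g(i)$ modulo $\ell$ again lies in $\cR$, so that part (1) can be applied a second time---you check explicitly, which is appropriate.
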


These observations lead directly to the following formula for $\ell_t$.

\begin{corollary}\label{cor:lt}
For $t\in\cR$, we have
\be\label{eq:lt}
\ell_t=\sum_{i\in\N,\; i\equiv t\text {(mod }2\ell)}(w_i-w_{i+2\ol t}).
\ee
\end{corollary}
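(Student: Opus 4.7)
The plan is to combine Proposition \ref{prop:diml}, which expresses $\ell_t$ as an alternating sum of $w_{g^i(t)}$, with the two iteration formulas from Lemma \ref{lem:g}. The key observation is that since $t\in\cR=\{0,1,\dots,\ell-2\}$ lies in $\N'$, all iterates $g^i(t)$ remain in $\N'$, so Lemma \ref{lem:g}(2) applies at every step and gives $g^{2k}(t)=t+2k\ell$.

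First I would compute $g(t)$ explicitly for $t\in\cR$. Since $t\equiv t\pmod\ell$ with $t\in\{0,\dots,\ell-2\}$, Lemma \ref{lem:g}(1) yields $g(t)=t+2\ol t$. Combined with Lemma \ref{lem:g}(2), an immediate induction on $k$ produces
\[
g^{2k}(t)=t+2k\ell, \qquad g^{2k+1}(t)=g(g^{2k}(t))=t+2k\ell+2\ol t,
\]
where the second equality uses the fact that $g^{2k}(t)\equiv t\pmod\ell$, so Lemma \ref{lem:g}(1) applies to it with the same residue.

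Next, I would insert these formulas into the expression \eqref{eq:fdim} from Proposition \ref{prop:diml} and group consecutive terms in pairs:
\[
\ell_t=\sum_{i=0}^\infty(-1)^i w_{g^i(t)}=\sum_{k=0}^\infty\bigl(w_{g^{2k}(t)}-w_{g^{2k+1}(t)}\bigr)=\sum_{k=0}^\infty\bigl(w_{t+2k\ell}-w_{t+2k\ell+2\ol t}\bigr).
\]
The pairing is legitimate term-by-term because, as noted after Proposition \ref{prop:diml}, for any fixed $n$ the sum is actually finite (only finitely many $g^i(t)\le n$ give nonzero contributions). Reindexing by $i=t+2k\ell$ rewrites the final sum as $\sum_{i\equiv t\,(\mathrm{mod}\,2\ell)}(w_i-w_{i+2\ol t})$, which is exactly the claimed formula \eqref{eq:lt}.

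There is no real obstacle here; the whole point is that Lemma \ref{lem:g} has done the combinatorial work of describing the orbit structure of $g$ on residues mod $2\ell$, so Corollary \ref{cor:lt} is a direct bookkeeping consequence of Proposition \ref{prop:diml}. The only minor care needed is verifying that the iterates stay in $\N'$ (which they do because each $g^{2k}(t)\equiv t\pmod\ell$ and $t\not\equiv -1\pmod\ell$, and each $g^{2k+1}(t)\equiv t+2\ol t=-t-2\pmod\ell$, neither of which equals $-1$ mod $\ell$ for $t\in\cR$ when $\ell\neq 3$, consistent with our standing assumption $\ell\geq 4$ from Remark \ref{rem:l3}).
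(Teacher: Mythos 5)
Your proof is correct and takes exactly the approach the paper intends: the paper's proof of Corollary \ref{cor:lt} is the one-line remark that it is a direct consequence of Proposition \ref{prop:diml} and Lemma \ref{lem:g}, and you have simply written out the bookkeeping (computing $g^{2k}(t)$ and $g^{2k+1}(t)$, pairing consecutive terms, reindexing). The only superfluous part is the final paragraph verifying that the iterates stay in $\N'$ — this is automatic since $g$ is defined as a map $\N'\to\N'$ — and the qualifier ``when $\ell\neq 3$'' there is unnecessary (the residue $-t-2$ avoids $-1\bmod\ell$ for every $\ell\geq 3$ once $t\in\cR$), but neither point affects the validity of the argument.
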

\begin{proof}
This is a direct consequence of Proposition \ref{prop:diml} and Lemma \ref{lem:g}. 
\end{proof}

We are now able to prove the key inductive relations.

\begin{proposition}\label{prop:red}
\begin{enumerate}
\item If $t\in\cR$ and $t\neq 0\text{ or }\ell-2$, then for $n\equiv t-1\text {(mod }2)$, we have
\be\label{eq:redgen}
\ell_t(n+1)=\ell_{t-1}(n)+\ell_{t+1}(n).
\ee
\item For $t=0$ and $n$ odd, we have
\be\label{eq:red0}
\ell_0(n+1)=\ell_1(n).
\ee
\item For $t=\ell-2$ and $n\equiv \ell-1\text {(mod }2)$, we have 
\be\label{eq:redl-2}
\ell_{\ell-2}(n+1)=\ell_{\ell-3}(n).
\ee
\end{enumerate}
\end{proposition}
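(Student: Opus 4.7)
My approach is to apply Corollary~\ref{cor:lt} to $\ell_t(n+1)$, then use the one-step recursion \eqref{eq:recw2} to rewrite each $w_\bullet(n+1)$ in terms of $w_\bullet(n)$, and finally reorganize the resulting sum so as to recognize $\ell_{t-1}(n)+\ell_{t+1}(n)$, with minor adjustments in the two extremal cases $t=0$ and $t=\ell-2$.

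\textbf{Main computation for (1).} Write $\bar t = \ell - 1 - t$. Applying \eqref{eq:recw2} twice inside Corollary~\ref{cor:lt} gives
\begin{equation*}
\ell_t(n+1) = \sum_{i \equiv t \pmod{2\ell}} \bigl(w_{i-1}(n)+w_{i+1}(n)-w_{i+2\bar t-1}(n)-w_{i+2\bar t+1}(n)\bigr).
\end{equation*}
Note that $\overline{t-1}=\bar t+1$ and $\overline{t+1}=\bar t-1$. The substitution $j=i-1$ converts
\begin{equation*}
\sum_{i \equiv t \pmod{2\ell}} \bigl(w_{i-1}(n) - w_{i+2\bar t+1}(n)\bigr)
\end{equation*}
into $\ell_{t-1}(n)$ by another application of Corollary~\ref{cor:lt}, and $j=i+1$ converts the remaining two-term sum into $\ell_{t+1}(n)$. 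Adding proves~\eqref{eq:redgen}.

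\textbf{Boundary cases.} For $t=0$ the same formal manipulation would produce the ill-defined quantity $\ell_{-1}(n)$, but the obstruction collapses: the sums $\sum_{i \equiv 0 \pmod{2\ell}} w_{i-1}(n)$ and $\sum_{i \equiv 0 \pmod{2\ell}} w_{i+2\ell-1}(n)$ both range over the index set $\{-1,2\ell-1,4\ell-1,\dots\}$ and thus differ only by $w_{-1}(n)=0$. These contributions cancel, leaving exactly $\ell_1(n)$ and proving~\eqref{eq:red0}. For $t=\ell-2$ we have $\bar t=1$, so the middle pair $w_{i+1}(n)$ and $w_{i+2\bar t-1}(n)=w_{i+1}(n)$ cancels identically; the sum then reduces to $\sum (w_{i-1}(n)-w_{i+3}(n))$, which under $j=i-1$ and the identity $\overline{\ell-3}=2$ is exactly $\ell_{\ell-3}(n)$, proving~\eqref{eq:redl-2}.

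\textbf{Main difficulty.} The only non-routine point is the telescoping in the $t=0$ case: one must verify that the two shifted sums over residues $\equiv -1 \pmod{2\ell}$ genuinely coincide apart from the $w_{-1}(n)=0$ term, relying on the convention of Definition~\ref{def:dims}. Beyond this, the argument is pure combinatorial bookkeeping around the $g$-function identity $g(i)=i+2\bar t$ of Lemma~\ref{lem:g}, which Corollary~\ref{cor:lt} has already repackaged into usable form.
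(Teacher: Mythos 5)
Your proof is correct and follows essentially the same route as the paper's: apply Corollary~\ref{cor:lt}, expand each $w_\bullet(n+1)$ via \eqref{eq:recw2}, and regroup the four terms using $\overline{t-1}=\bar t+1$ and $\overline{t+1}=\bar t-1$ so that each pair reassembles (after reindexing by $j=i\mp 1$) into $\ell_{t\mp 1}(n)$. The treatment of the two boundary cases — the telescoping cancellation leaving only $w_{-1}(n)=0$ when $t=0$, and the identical cancellation of the middle pair when $t=\ell-2$ — matches the paper's \eqref{eq:0} and \eqref{eq:l-2}.
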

\begin{proof}
We simply apply the relation \eqref{eq:recw2} to equation \eqref{eq:lt}. First observe that
if $i\equiv t\text{(mod }\ell)$, then 
\be\label{eq:all}
w_i(n+1)-w_{i+2\ol t}(n+1)=w_{i-1}(n)+w_{i+1}(n)-w_{i+2\ol t-1}(n)-w_{i+2\ol t+1}(n).
\ee
We shall combine the terms of the right side of \eqref{eq:all} in different ways, depending on the value of $t$.
First take $t$ such that $0<t<\ell-2$. Then, recalling that $\ol{t\pm 1}=\ol t\mp 1$, we have
\be\label{eq:gen}
\begin{aligned}
w_i(n+1)-w_{i+2\ol t}(n+1)=&(w_{i-1}(n)-w_{i+2\ol t+1}(n))+(w_{i+1}(n)-w_{i+2\ol t-1}(n))\\
=&(w_{i-1}(n)-w_{i-1+2\ol t+2}(n))+(w_{i+1}(n)-w_{i+1+2\ol t-2}(n))\\
=&(w_{i-1}(n)-w_{i-1+2\ol{ t-1}}(n))+(w_{i+1}(n)-w_{i+1+2\ol{ t+1}}(n)).\\
\end{aligned}
\ee

Now given \eqref{eq:lt},  summing both sides of \eqref{eq:gen} over $i\equiv t\text{(mod }\ell)$ yields the relation \eqref{eq:redgen}.

Next take $i\equiv 0\text{(mod }\ell)$, i.e. $t=0$. Then \eqref{eq:all} may be written as follows. For $i\equiv 0\text{(mod }\ell)$,
we have
\be\label{eq:0}
\begin{aligned}
w_i(n+1)-w_{i+2(\ell-1)}(n+1)=&(w_{i-1}(n)-w_{i-1+2\ell}(n))+(w_{i+1}(n)-w_{i+1+2(\ell-2)}(n))\\
=&(w_{i-1}(n)-w_{i-1+2\ell}(n))+(w_{i+1}(n)-w_{i+1+2(\ol{1})}(n)).\\
\end{aligned}
\ee

Summing both sides of \eqref{eq:0} over $i\equiv 0\text{(mod }\ell)$, we see that the first summand on the right is zero since 
all non-zero terms cancel, while the second summand is $\ell_1$ by \eqref{eq:lt}. This proves the relation \eqref{eq:red0}.

Finally, take $i\equiv \ell-2\text{(mod }\ell)$, i.e. $t=\ell-2$. In this case \eqref{eq:all} reads as follows.
\be\label{eq:l-2}
\begin{aligned}
w_i(n+1)-w_{i+2(\ol{\ell-2})}(n+1)=&w_{i-1}(n)+w_{i+1}(n)-(w_{i+1}(n)+w_{i+3}(n))\\
=&(w_{i-1}(n)-w_{i-1+2\ol{\ell-3}}(n)).\\
\end{aligned}
\ee

Summing both sides over $i\equiv \ell-2\text{(mod }\ell)$ yields the relation \eqref{eq:redl-2}
and completes the proof of the proposition.
\end{proof}

\begin{corollary}\label{cor:rec-l} We have the following recurrence for the dimensions $\ell_i(n)$.

\be\label{eq:recmat}
\begin{pmatrix}
\ell_0(n+1)\\
\ell_1(n+1)\\
\\
\\
.\\
.\\
.\\
\\
\\
\ell_{\ell-2}(n+1)\\
\end{pmatrix}
=
\begin{pmatrix}
0 & 1 & 0 & 0 & . & . & . &  &  & 0\\
1 & 0 & 1 & 0 & . & . & . &  &  & 0\\
0 & 1 & 0 &1  & 0 & . & . & . &  & 0\\
0 &0&1&0&1&.&.&.&&0\\
&&&.&.&.&&&&\\
&&&.&.&.&&&&\\
&&&.&.&.&&&&\\
0 & 0 & & . & . & . &  & 1 & 0 & 1\\
0 & 0 &  & . & . & . &  &0  &1  & 0\\
\end{pmatrix}
\begin{pmatrix}
\ell_0(n)\\
\ell_1(n)\\
\\
\\
.\\
.\\
.\\
\\
\\
\ell_{\ell-2}(n)
\end{pmatrix}
\ee
\end{corollary}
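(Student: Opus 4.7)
The proof is essentially a repackaging of Proposition \ref{prop:red}, and the plan is to verify the matrix equation row by row, splitting into cases according to the parity of $n$.

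First I would read off the rows of \eqref{eq:recmat}. Row $0$ asserts $\ell_0(n+1)=\ell_1(n)$; the intermediate rows $t$ for $0<t<\ell-2$ assert $\ell_t(n+1)=\ell_{t-1}(n)+\ell_{t+1}(n)$; and the last row asserts $\ell_{\ell-2}(n+1)=\ell_{\ell-3}(n)$. These are precisely the three families of identities \eqref{eq:red0}, \eqref{eq:redgen}, \eqref{eq:redl-2} from Proposition \ref{prop:red}, but without any parity constraint on $n$. Thus my task is only to handle the case where $n$ has the ``wrong'' parity.

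Next I would recall from Definition \ref{def:dims} and Theorem \ref{thm:tlcomp} that $\ell_t(m)=0$ whenever $t\not\in\CT(m)$, in particular whenever $t\not\equiv m\pmod 2$. Fix $t$ with $0\le t\le \ell-2$. If $n\equiv t-1\pmod 2$, then $n+1\equiv t\pmod 2$, and the required identity in row $t$ is exactly what Proposition \ref{prop:red} provides (taking $t=0$, $0<t<\ell-2$, or $t=\ell-2$ as appropriate). If instead $n\equiv t\pmod 2$, then $n+1\not\equiv t\pmod 2$, so $\ell_t(n+1)=0$, and simultaneously $\ell_{t\pm 1}(n)=0$ since $t\pm 1\not\equiv n\pmod 2$; thus both sides of the row $t$ identity vanish and the equality holds trivially.

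Combining these two cases across all rows $t=0,1,\dots,\ell-2$ gives the matrix identity \eqref{eq:recmat}. The ``hard part'' here is really nonexistent, since Proposition \ref{prop:red} has already done the combinatorial work; the only point worth flagging is the parity-based vanishing, which is what allows one single matrix (with nonzero entries on both sub- and super-diagonals) to encode simultaneously the transitions from even $n$ to odd $n+1$ and from odd $n$ to even $n+1$.
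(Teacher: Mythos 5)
Your proposal is correct and matches the paper's intended argument: the corollary is stated without proof precisely because it is a direct repackaging of Proposition~\ref{prop:red}, and the only point needing a remark is the parity-vanishing of $\ell_t$ (from Definition~\ref{def:dims}), which makes the rows with the ``wrong'' parity hold trivially as $0=0$. You have identified and handled that point exactly as intended.
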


Note that since $\ell_i(n)\neq 0$ only if $i\equiv n\text{(mod }2)$, to usefully apply the relation \eqref{eq:recmat}, we need to square the 
$(\ell-1)\times(\ell-1)$ matrix in that relation. The result is conveniently formulated in terms of the following matrices. Define $m\times m$
matrices as follows.
{\small
\be\label{eq:defM}
\begin{aligned}
M_{11}(m)=\begin{pmatrix}
1 & 1 & 0 & 0 & . & . & . &  &  & 0\\
1 & 2 & 1 & 0 & . & . & . &  &  & 0\\
0 & 1 & 2 &1  & 0 & . & . & . &  & 0\\
0 &0&1&2&1&.&.&.&&0\\
&&&.&.&.&&&&\\
&&&.&.&.&&&&\\
&&&.&.&.&&&&\\
0 & 0 & & . & . & . &  & 1 & 2 & 1\\
0 & 0 &  & . & . & . &  &0  &1  & 1\\
\end{pmatrix}, &\;
M_{22}(m)=\begin{pmatrix}
2 & 1 & 0 & 0 & . & . & . &  &  & 0\\
1 & 2 & 1 & 0 & . & . & . &  &  & 0\\
0 & 1 & 2 &1  & 0 & . & . & . &  & 0\\
0 &0&1&2&1&.&.&.&&0\\
&&&.&.&.&&&&\\
&&&.&.&.&&&&\\
&&&.&.&.&&&&\\
0 & 0 & & . & . & . &  & 1 & 2 & 1\\
0 & 0 &  & . & . & . &  &0  &1  & 2\\
\end{pmatrix}\\
M_{12}(m)=\begin{pmatrix}
1 & 1 & 0 & 0 & . & . & . &  &  & 0\\
1 & 2 & 1 & 0 & . & . & . &  &  & 0\\
0 & 1 & 2 &1  & 0 & . & . & . &  & 0\\
0 &0&1&2&1&.&.&.&&0\\
&&&.&.&.&&&&\\
&&&.&.&.&&&&\\
&&&.&.&.&&&&\\
0 & 0 & & . & . & . &  & 1 & 2 & 1\\
0 & 0 &  & . & . & . &  &0  &1  & 2\\
\end{pmatrix}, &\;
M_{21}(m)=\begin{pmatrix}
2 & 1 & 0 & 0 & . & . & . &  &  & 0\\
1 & 2 & 1 & 0 & . & . & . &  &  & 0\\
0 & 1 & 2 &1  & 0 & . & . & . &  & 0\\
0 &0&1&2&1&.&.&.&&0\\
&&&.&.&.&&&&\\
&&&.&.&.&&&&\\
&&&.&.&.&&&&\\
0 & 0 & & . & . & . &  & 1 & 2 & 1\\
0 & 0 &  & . & . & . &  &0  &1  & 1\\
\end{pmatrix}\\
\end{aligned}
\ee
}

\begin{theorem}\label{thm:qdim-main}
Let $\ell=|q^2|\geq 4$. The dimensions $\ell_i(n)$ of the simple $Q_n(\ell)$-modules are determined by the initial conditions
$\ell_i(\ell-1)=w_i(\ell-1)$ for $i\equiv \ell-1\text{(mod }2)$, $\ell_i(\ell-2)=w_i(\ell-2)$ for $i\equiv \ell\text{(mod }2)$, as well as the following
recurrences. Let $\ell\geq 6$. If $\ell$ is even, then
{\small
\be\label{eq:rec11}
\bl_{00}(n+2):=
\begin{pmatrix}
\ell_0(n+2)\\
\ell_2(n+2)\\
\\
\\
.\\
.\\
.\\
\\
\\
\ell_{{\ell-2}}(n+2)\\
\end{pmatrix}
=
M_{11}\left(\frac{\ell}{2}\right)
\begin{pmatrix}
\ell_0(n)\\
\ell_2(n)\\
\\
\\
.\\
.\\
.\\
\\
\\
\ell_{\ell-2}(n)\\
\end{pmatrix}
\ee
and 
\be\label{eq:rec22}
\bl_{01}(n+2):=
\begin{pmatrix}
\ell_1(n+2)\\
\ell_3(n+2)\\
\\
\\
.\\
.\\
.\\
\\
\\
\ell_{\ell-3}(n+2)\\
\end{pmatrix}
=
M_{22}\left(\frac{\ell}{2}-1\right)
\begin{pmatrix}
\ell_1(n)\\
\ell_3(n)\\
\\
\\
.\\
.\\
.\\
\\
\\
\ell_{\ell-3}(n)\\
\end{pmatrix}.
\ee

If $\ell\geq 6$ is odd, then
\be\label{eq:rec12}
\bl_{10}(n+2):=
\begin{pmatrix}
\ell_0(n+2)\\
\ell_2(n+2)\\
\\
\\
.\\
.\\
.\\
\\
\\
\ell_{\ell-3}(n+2)\\
\end{pmatrix}
=
M_{12}\left(\frac{\ell-1}{2}\right)
\begin{pmatrix}
\ell_0(n)\\
\ell_2(n)\\
\\
\\
.\\
.\\
.\\
\\
\\
\ell_{\ell-3}(n)\\
\end{pmatrix}
\ee
and 
\be\label{eq:rec21}
\bl_{11}(n+2):=
\begin{pmatrix}
\ell_1(n+2)\\
\ell_3(n+2)\\
\\
\\
.\\
.\\
.\\
\\
\\
\ell_{\ell-2}(n+2)\\
\end{pmatrix}
=
M_{21}\left(\frac{\ell-1}{2}\right)
\begin{pmatrix}
\ell_1(n)\\
\ell_3(n)\\
\\
\\
.\\
.\\
.\\
\\
\\
\ell_{\ell-2}(n)\\
\end{pmatrix}.
\ee
}
If $\ell=4$ or $5$ then as noted in the exposition above, the respective recurrence matrices are $\begin{pmatrix} 1&1\\1&1\\ \end{pmatrix}$, 
$\begin{pmatrix}  2 \end{pmatrix}$ and $\begin{pmatrix} 1&1\\2&1\\ \end{pmatrix}$, $\begin{pmatrix} 2&1\\1&1\\ \end{pmatrix}$.
\end{theorem}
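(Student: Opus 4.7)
The plan is to iterate the one-step recurrences of Proposition \ref{prop:red} to obtain a two-step recurrence that is parity-preserving. Since $\ell_i(n)$ vanishes unless $i \equiv n \pmod 2$, the vector of dimensions appearing on either side of the recurrence $\bl(n+2)$ in terms of $\bl(n)$ naturally decouples into its even-indexed and odd-indexed subvectors, and exactly one of these carries the nonzero data at each step. This is the origin of the block structure encoded by the four matrices $M_{ij}$.

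First I would handle the interior rows uniformly. For every index $t$ with $0 < t < \ell - 2$ of the appropriate parity, two applications of \eqref{eq:redgen} yield $\ell_t(n+2) = \ell_{t-1}(n+1) + \ell_{t+1}(n+1) = \ell_{t-2}(n) + 2\ell_t(n) + \ell_{t+2}(n)$, matching the common tridiagonal pattern $(\ldots,1,2,1,\ldots)$ appearing on the interior rows of every $M_{ij}$ in \eqref{eq:defM}.

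Next I would analyze the four corner entries. If $0$ lies in the parity class under consideration, the composition of \eqref{eq:red0} with \eqref{eq:redgen} gives $\ell_0(n+2) = \ell_1(n+1) = \ell_0(n) + \ell_2(n)$, producing top-left entry $1$. If $0$ does not lie in the class (so that $1$ is the smallest index present), applying \eqref{eq:redgen} at $t=1$ followed by \eqref{eq:red0} yields $\ell_1(n+2) = \ell_0(n+1) + \ell_2(n+1) = 2\ell_1(n) + \ell_3(n)$, giving top-left entry $2$. The symmetric analysis at the right boundary, using \eqref{eq:redl-2}, shows that the bottom-right entry is $1$ exactly when $\ell-2$ lies in the class, and $2$ otherwise.

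Finally I would enumerate the four cases according to which of $0$ and $\ell-2$ lies in the parity class. When $\ell$ is even, the even-index class $\{0,2,\ldots,\ell-2\}$ of size $\ell/2$ contains both endpoints, giving $M_{11}(\ell/2)$, while the odd-index class $\{1,3,\ldots,\ell-3\}$ of size $\ell/2-1$ contains neither, giving $M_{22}(\ell/2-1)$. When $\ell$ is odd, the even-index class $\{0,2,\ldots,\ell-3\}$ of size $(\ell-1)/2$ contains only $0$, giving $M_{12}((\ell-1)/2)$, while the odd-index class $\{1,3,\ldots,\ell-2\}$ of size $(\ell-1)/2$ contains only $\ell-2$, giving $M_{21}((\ell-1)/2)$. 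The initial conditions at $n=\ell-2,\ell-1$ follow because both $\TL_{\ell-2}$ and $\TL_{\ell-1}$ are semisimple by Proposition \ref{prop:rad} and the Remark following it, so the cell modules with $t \leq \ell - 2$ coincide with their simple quotients, and $J_\ell$ annihilates them by the argument already given in the proof of Theorem \ref{thm:zero}; hence $\ell_i(n) = w_i(n)$ at these values. No genuine obstacle arises; the only subtlety is the careful bookkeeping of which of the two boundary indices $0$ and $\ell-2$ belongs to each parity class, as this entirely controls whether the associated corner entry is $1$ or $2$, and the small cases $\ell=4,5$ are then covered by the same formulas with the correspondingly small matrix sizes.
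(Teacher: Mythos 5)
Your proposal is correct and takes essentially the same route as the paper, which derives Theorem~\ref{thm:qdim-main} by squaring the tridiagonal recurrence matrix of Corollary~\ref{cor:rec-l} (the paper does not give explicit computations, merely remarking that one must square the matrix; your write-up makes the block computation explicit).

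One point of care in your first paragraph: the double application of \eqref{eq:redgen} is only legitimate when \emph{both} $t-1$ and $t+1$ lie strictly between $0$ and $\ell-2$. As you have phrased it, the claim purports to hold for all $t$ with $0<t<\ell-2$ of the right parity, which includes $t=\ell-3$ (and $t=1$). For $t=\ell-3$ the second application is not \eqref{eq:redgen} at $\ell-2$ but rather \eqref{eq:redl-2}, and the formula $\ell_{\ell-5}(n)+2\ell_{\ell-3}(n)+\ell_{\ell-1}(n)$ obtained by blindly applying \eqref{eq:redgen} is genuinely wrong, since $\ell_{\ell-1}(n)=w_{\ell-1}(n)\ne 0$ in general (the index $\ell-1$ lies in $\CT(n)$ but not in $\N'$, and $W_{\ell-1}$ is simple). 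Your second paragraph does handle this row correctly, so the proof stands; the range in the first paragraph should simply be restricted to the strict interior of the parity class, i.e.\ to $t$ with both $t\pm 1$ in $(0,\ell-2)$. (As a side remark, the small matrices recorded in the paper's statement for $\ell=5$ contain a typographical slip: since all $M_{ij}$ are symmetric, the first should read $\begin{pmatrix}1&1\\1&2\end{pmatrix}$, consistent with $M_{12}(2)$ and with \eqref{eq:recq5}.)
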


\begin{remark}\label{rem:mats}
We remark that our matrices $M_{22}$ and $M_{12}$ coincide with the matrices occurring in \cite[\S 4.2]{R}. However the context appears 
to be different.
\end{remark}

To state Theorem \ref{thm:qdim-main} more explicitly, and in order to give explicit formulae for the dimension
of the algebras $Q_n(\ell)$, we introduce the following notation. For $\ell\geq 6$, 
define column vectors $\bw_{ij}$ for $i,j\in\{0,1\}$
as follows.

\be\label{eq:wijev}
\text{For $\ell$ even, define }
\bw_{00}=\begin{pmatrix}
w_0(\ell-2)\\
w_2(\ell-2)\\
\\
\\
.\\
.\\
.\\
\\
\\
w_{\ell-2}(\ell-2)\\
\end{pmatrix}
\text{ and }
\bw_{01}=\begin{pmatrix}
w_1(\ell-3)\\
w_3(\ell-3)\\
\\
\\
.\\
.\\
.\\
\\
\\
w_{\ell-3}(\ell-3)\\
\end{pmatrix}.
\ee
\be\label{eq:wijod}
\text{For $\ell$ odd, define }
\bw_{10}=\begin{pmatrix}
w_0(\ell-3)\\
w_2(\ell-3)\\
\\
\\
.\\
.\\
.\\
\\
\\
w_{\ell-3}(\ell-3)\\
\end{pmatrix}
\text{ and }
\bw_{11}=\begin{pmatrix}
w_1(\ell-2)\\
w_3(\ell-2)\\
\\
\\
.\\
.\\
.\\
\\
\\
w_{\ell-2}(\ell-2)\\
\end{pmatrix}.
\ee

Theorem \ref{thm:qdim-main} may now be stated as follows.
\begin{corollary}\label{cor:main}
If $\ell$ is even, then 
\be\label{eq:main1}
\bl_{00}(2n)=M_{11}^{n-\frac{\ell}{2}+1}\bw_{00}, and
\ee
\be\label{eq:main2}
\bl_{01}(2n+1)=M_{22}^{n-\frac{\ell}{2}+2}\bw_{01}.
\ee

If $\ell$ is odd, then
\be\label{eq:main1}
\bl_{10}(2n)=M_{12}^{n-\frac{\ell-3}{2}}\bw_{10}, and
\ee
\be\label{eq:main1}
\bl_{11}(2n+1)=M_{21}^{n-\frac{\ell-3}{2}}\bw_{11}.
\ee
\end{corollary}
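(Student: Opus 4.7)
The plan is to deduce Corollary \ref{cor:main} by iterating the one-step ($n \mapsto n+2$) recurrences of Theorem \ref{thm:qdim-main} starting from suitable base cases. The base cases come from the observation that $Q_n(\ell) = \TL_n(q)$ for $n \leq \ell - 2$, since the ideal $J_\ell$ is generated by $E_{\ell-1} \in \TL_{\ell-1}$ and so acts as zero on all of $\TL_n$ for such $n$. Combined with the fact that $\TL_{\ell-2}$ and $\TL_{\ell-1}$ are both semisimple (the remark following Proposition \ref{prop:rad}), this says each cell module $W_i(n)$ at these values of $n$ is already irreducible, so $\ell_i(\ell-2) = w_i(\ell-2)$ for $i \equiv \ell \pmod 2$ and $\ell_i(\ell-1) = w_i(\ell-1)$ for $i \equiv \ell - 1 \pmod 2$. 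Packaging these into column vectors produces exactly the base vectors $\bw_{ij}$ defined in \eqref{eq:wijev} and \eqref{eq:wijod}.

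Next, I would iterate the appropriate recurrence from Theorem \ref{thm:qdim-main} in each of the four cases. For $\ell$ even, a $k$-fold application of \eqref{eq:rec11} to $\bl_{00}(\ell - 2) = \bw_{00}$ yields $\bl_{00}(\ell - 2 + 2k) = M_{11}(\ell/2)^k \bw_{00}$, and the substitution $\ell - 2 + 2k = 2n$ gives the exponent $k = n - \ell/2 + 1$ displayed in \eqref{eq:main1}. The three remaining cases are entirely analogous: $\bl_{01}$ is obtained by iterating \eqref{eq:rec22} starting from $n = \ell - 3$ (odd); for $\ell$ odd, $\bl_{10}$ iterates \eqref{eq:rec12} starting from $n = \ell - 3$ (even), and $\bl_{11}$ iterates \eqref{eq:rec21} starting from $n = \ell - 2$ (odd). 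Solving for $k$ in terms of $n$ in each identity $n_0 + 2k = 2n$ or $n_0 + 2k = 2n+1$ recovers the stated exponents.

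The only real obstacle is bookkeeping: one must verify in each of the four cases that the parity of the chosen base index $n_0$ matches the parity of the target ($2n$ or $2n+1$), and that the iteration count agrees with the claimed power of the matrix $M_{ij}$. Once this routine check is carried out, the corollary follows by a straightforward induction on $n$ from Theorem \ref{thm:qdim-main}.
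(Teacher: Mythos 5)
Your proposal is correct and is essentially the only route: the paper offers no separate proof of Corollary \ref{cor:main}, merely noting that it restates Theorem \ref{thm:qdim-main}, and your argument — unwinding the two-step recurrences from base cases where $\TL_n$ is still semisimple (so $\ell_i(n)=w_i(n)$ for $n\leq \ell-1$), then solving $n_0+2k=2n$ or $n_0+2k=2n+1$ for $k$ — is exactly the content of that restatement. Your exponent and parity bookkeeping all check out, including the observation that the base index for $\bl_{01}$ and $\bl_{10}$ is $n_0=\ell-3$ rather than the $\ell-1$ mentioned in the theorem's initial conditions; this works because $\ell_i(n)=w_i(n)$ for every $n\leq\ell-1$, not just the two values singled out in the theorem statement. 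One small caveat on phrasing: for $n\leq\ell-2$ it is not that $J_\ell$ ``acts as zero'' on $\TL_n$ — the element $E_{\ell-1}$ does not even live in $\TL_n$ — but rather that the paper adopts the convention $Q_n(\ell)=\TL_n(q)$ for $n<\ell-1$, and the needed identity $\ell_i(n)=w_i(n)$ then follows from the semisimplicity of $\TL_n$ for $n\leq\ell-1$ (the remark after Proposition \ref{prop:rad}).
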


\subsection{The dimension of $Q_n(\ell)$} Using the fact that the dimension of a semisimple $\C$-algebra is equal to the sum of the
squares of the dimensions of its simple modules, we may now write the following closed formulae for the dimension of $Q_n(\ell)$,
bearing in mind that the matrices $M_{ij}$ are all symmetric..

\begin{theorem}\label{thm:dimq}
If $\ell\geq 6$ is even, then 
$$\dim(Q_{2n}(\ell))=\bw_{00}^t M_{11}^{2n-\ell+2}\bw_{00},$$ and
$$\dim(Q_{2n+1}(\ell))=\bw_{01}^t M_{22}^{2n-\ell+4}\bw_{01}.$$

If $\ell$ is odd, then $$\dim(Q_{2n}(\ell))=\bw_{10}^t M_{12}^{2n-\ell+3}\bw_{10},$$ and
$$\dim(Q_{2n+1}(\ell))=\bw_{11}^t M_{21}^{2n-\ell+3}\bw_{11}.$$
\end{theorem}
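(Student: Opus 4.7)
The plan is to combine the semisimplicity of $Q_n(\ell)$ established in \eqref{eq:qss} with the explicit expressions for the dimensions of the simple modules given in Corollary \ref{cor:main}. Since $Q_n(\ell)$ is a finite-dimensional semisimple $\C$-algebra, Wedderburn's theorem gives
\[
\dim(Q_n(\ell)) = \sum_{t} \ell_t(n)^2,
\]
where the sum is over $t$ with $0\leq t\leq \ell-2$ and $t\equiv n\pmod 2$ (Theorem \ref{thm:zero}). In the column-vector notation of Corollary \ref{cor:main}, this is precisely $\bl_{ij}(n)^t\bl_{ij}(n)$ for the appropriate pair $(i,j)\in\{0,1\}^2$ determined by the parities of $\ell$ and $n$.

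The first step is to observe that each of the four recurrence matrices $M_{11}, M_{12}, M_{21}, M_{22}$ defined in \eqref{eq:defM} is symmetric: inspection of their tridiagonal form shows that the entries are palindromic across the main diagonal (this is consistent with the remark in \ref{rem:mats}). Consequently, for any nonnegative integer $k$ and any column vector $\bw$,
\[
(M^{k}\bw)^{t}(M^{k}\bw)=\bw^{t}(M^{k})^{t}M^{k}\bw=\bw^{t}M^{2k}\bw.
\]

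The second step is then to apply Corollary \ref{cor:main} and double the exponents. For instance, when $\ell$ is even and $n$ is even, Corollary \ref{cor:main} gives $\bl_{00}(2n)=M_{11}^{\,n-\frac{\ell}{2}+1}\bw_{00}$, whence
\[
\dim(Q_{2n}(\ell))=\bl_{00}(2n)^{t}\bl_{00}(2n)=\bw_{00}^{t}M_{11}^{\,2n-\ell+2}\bw_{00},
\]
which is the first asserted formula. The other three cases (even $\ell$ with $n$ odd; odd $\ell$ with $n$ even or odd) follow by the same calculation applied to the corresponding identity in Corollary \ref{cor:main}.

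Since the heavy lifting has already been carried out in Propositions \ref{prop:sim}--\ref{prop:red}, Theorem \ref{thm:zero} and Corollary \ref{cor:main}, there is essentially no obstacle: the only nontrivial verification is the symmetry of each $M_{ij}$, which is immediate from \eqref{eq:defM}. The proof is therefore a short assembly of Wedderburn's theorem, the symmetry of the transfer matrices, and the closed-form iterations of Corollary \ref{cor:main}.
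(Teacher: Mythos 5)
Your proof is correct and follows precisely the route the paper sketches: Wedderburn's theorem for the semisimple algebra $Q_n(\ell)$, the closed forms of Corollary~\ref{cor:main}, and the symmetry of the tridiagonal matrices $M_{ij}$ to fold $(M^k\bw)^t(M^k\bw)$ into $\bw^tM^{2k}\bw$. The paper itself gives no displayed proof — it simply states the two ingredients (sum of squares of simple-module dimensions, and symmetry of the $M_{ij}$) in the paragraph preceding Theorem~\ref{thm:dimq} — so your write-up is exactly the argument the authors intend.

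One tiny remark on wording: the relevant fact is that each $M_{ij}$ is a \emph{symmetric matrix} (equal to its transpose, which for a tridiagonal matrix just requires the super- and sub-diagonals to agree); the diagonal entries of $M_{12}$ and $M_{21}$ are not palindromic, so ``palindromic across the main diagonal'' is a slightly misleading way to phrase it, though your conclusion $M^t=M$ is right. Also, Remark~\ref{rem:mats} concerns a coincidence with matrices in \cite{R}, not symmetry, so that citation is not doing the work you suggest.
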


\subsection{The case $\ell=6$} We give the explicit solutions for the case $\ell=6$.
\begin{theorem}\label{thm:l=6}
Let $\ell=6$. Then for $n\geq 1$,
\begin{enumerate}
\item For $n\geq 1$, we have $\ell_1(2n+1)=\dim(L_1(2n+1))=\frac{3^n+1}{2}$ and $\ell_3(2n+1)=\dim(L_3(2n+1))=\frac{3^n-1}{2}$.
\item $\dim(Q_{2n+1}(6))=\frac{3^{2n}+1}{2}$.
\item For $n\geq 1$, we have $\ell_0(2n)=\dim(L_0(2n))=\frac{3^{n-1}+1}{2}$, $\ell_2(2n)=\dim(L_2(2n))=3^{n-1}$
 and $\ell_4(2n)=\dim(L_4(2n))=\frac{3^{n-1}-1}{2}$.
\item For $n\geq 1$, we have $\dim(Q_{2n}(6))=\frac{3^{2n-1}+1}{2}$.
\end{enumerate}
\end{theorem}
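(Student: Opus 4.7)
The plan is to specialise Corollary \ref{cor:main} to $\ell=6$ and then diagonalise the (very small) recurrence matrices that appear. Because $\ell=6$ is even, the relevant matrices from \eqref{eq:defM} are
\[
M_{11}(3)=\begin{pmatrix} 1&1&0\\ 1&2&1\\ 0&1&1 \end{pmatrix},\qquad
M_{22}(2)=\begin{pmatrix} 2&1\\ 1&2 \end{pmatrix},
\]
so the whole theorem reduces to computing powers of these two matrices applied to specific initial vectors.

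First I would compute the initial data. By Theorem \ref{thm:dimw}, $w_t(t+2k)=F(t,k)$, and the starting vectors are read off directly:
\[
\bw_{00}=\begin{pmatrix} w_0(4)\\ w_2(4)\\ w_4(4)\end{pmatrix}=\begin{pmatrix} 2\\ 3\\ 1\end{pmatrix},\qquad
\bw_{01}=\begin{pmatrix} w_1(3)\\ w_3(3)\end{pmatrix}=\begin{pmatrix} 2\\ 1\end{pmatrix}.
\]
Corollary \ref{cor:main} then gives
$\bl_{00}(2n)=M_{11}(3)^{n-2}\bw_{00}$ for $n\geq 2$ and $\bl_{01}(2n+1)=M_{22}(2)^{n-1}\bw_{01}$ for $n\geq 1$, which is the launching point for everything in the theorem.

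Next I would diagonalise. The characteristic polynomial of $M_{22}(2)$ is $(\lambda-1)(\lambda-3)$, with eigenvectors $(1,-1)^t$ and $(1,1)^t$, giving the closed form
\[
M_{22}(2)^k=\tfrac12\begin{pmatrix} 3^k+1 & 3^k-1\\ 3^k-1 & 3^k+1 \end{pmatrix}.
\]
A short computation of the characteristic polynomial of $M_{11}(3)$ factors it as $\lambda(\lambda-1)(\lambda-3)$, with eigenvectors $(1,-1,1)^t,(1,0,-1)^t,(1,2,1)^t$ for eigenvalues $0,1,3$. Alternatively, and more efficiently, I would simply verify by direct matrix multiplication that $M_{11}(3)$ sends the vector $\bigl(\tfrac{3^{n-1}+1}{2},\,3^{n-1},\,\tfrac{3^{n-1}-1}{2}\bigr)^t$ to $\bigl(\tfrac{3^n+1}{2},\,3^n,\,\tfrac{3^n-1}{2}\bigr)^t$, and that this vector equals $\bw_{00}$ when $n=2$. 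This one-line check, combined with induction, proves parts (1) and (3) of the theorem (the odd case is handled the same way, using the closed form for $M_{22}^k$ and the initial vector $\bw_{01}$).

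Finally, statements (2) and (4) follow immediately from \eqref{eq:qss}, which gives
\[
\dim Q_n(6)=\sum_{t}\ell_t(n)^2.
\]
Substituting the formulae just proved yields, for even $n=2m$,
\[
\Bigl(\tfrac{3^{m-1}+1}{2}\Bigr)^2+(3^{m-1})^2+\Bigl(\tfrac{3^{m-1}-1}{2}\Bigr)^2=\tfrac{3\cdot 3^{2m-2}+1}{2}=\tfrac{3^{2m-1}+1}{2},
\]
and analogously for odd $n$. There is no serious obstacle: the essentially combinatorial content is already packaged in Theorem \ref{thm:qdim-main}, and the only calculation specific to $\ell=6$ is the diagonalisation of two small matrices. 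The mildly delicate point is keeping the off-by-one exponents in the inductive step straight, in particular anchoring the induction at $n=2$ (for even) and $n=1$ (for odd) where the initial vectors $\bw_{00}$ and $\bw_{01}$ themselves match the claimed closed forms.
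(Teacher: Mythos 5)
Your proposal is correct and takes essentially the same route as the paper: both specialise the general recurrences (Theorem~\ref{thm:qdim-main}, Corollary~\ref{cor:main}) to $\ell=6$ and then compute powers of the two small matrices $M_{11}(3)$ and $M_{22}(2)$; the paper writes the closed form of $M_{11}(3)^n$ and $M_{22}(2)^n$ directly, whereas you diagonalise or equivalently verify the eigenvector evolution by hand, which is the same computation. One small bookkeeping point worth flagging: the paper runs the even recursion from the vector $(1,1,0)^t$ at index $2$, so that part~(3) is established for all $n\geq 1$, but your anchor $\bw_{00}=(2,3,1)^t$ sits at index $4$ (your $n=2$), and since $M_{11}(3)$ is singular (it has eigenvalue $0$) the recursion cannot be run backwards from $\bw_{00}$; you should add the direct check $(\ell_0(2),\ell_2(2),\ell_4(2))=(1,1,0)$, noting that it agrees with $\bigl(\tfrac{3^0+1}{2},3^0,\tfrac{3^0-1}{2}\bigr)$, to cover $n=1$ in part~(3). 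The odd case is fine as stated, since $\bw_{01}$ already sits at index $3$, i.e.\ $n=1$.
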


Note that the statements (2) and (4) of the theorem may be combined to state that for $n\geq 2$, 
\be\label{eq:dimq-6}
\dim(Q_n(6))=\frac{3^{n-1}+1}{2}.
\ee
\begin{proof}
It follows from Theorem \ref{thm:qdim-main} that the dimensions $\ell_1(3+2n)$ and $\ell_3(3+2n)$ of the relevant simple modules
are given by the equation 
\be\label{eq:6-odd}
\begin{pmatrix}
\ell_1(3+2n)\\ \ell_3(3+2n)\\
\end{pmatrix}=
\begin{pmatrix}
2 & 1\\ 1 & 2\\
\end{pmatrix}^n
\begin{pmatrix}
2\\1\\
\end{pmatrix}
\ee 
But an easy calculation shows that 
\be
\begin{pmatrix}
2 & 1\\ 1 & 2\\
\end{pmatrix}^n=
\begin{pmatrix}
\frac{3^n+1}{2} & \frac{3^n-1}{2}\\ \frac{3^n-1}{2} & \frac{3^n+1}{2}\\
\end{pmatrix},
\ee
and the statement (1) follows.

The dimension of $Q_{2n+1}(6)$ is the sum of the squares of the dimensions of its simple modules, so that (2) follows immediately from (1).

Similarly, it also follows from Theorem \ref{thm:qdim-main} that the dimensions $\ell_i(2n+2)$ of the simple modules $L_i(2n+2)$ for $i$ even,
are given by the equation 
\be\label{eq:6-even}
\begin{pmatrix}
\ell_0(2n+2)\\ \ell_2(2n+2)\\\ell_4(2n+2)
\end{pmatrix}=
\begin{pmatrix}
1 & 1 & 0\\ 1 & 2 & 1\\0& 1 & 1\\
\end{pmatrix}^n
\begin{pmatrix}
1\\1\\0
\end{pmatrix}
\ee 

Further, another easy calculation shows that for $n\geq 1$,
\be
\begin{pmatrix}
1 & 1 & 0\\ 1 & 2 & 1\\ 0 & 1 & 1\\
\end{pmatrix}^n=
\begin{pmatrix}
\frac{3^{n-1}+1}{2} & 3^{n-1} & \frac{3^{n-1}-1}{2}\\ 3^{n-1} & 2\times 3^{n-1} & 3^{n-1}\\
 \frac{3^{n-1}-1}{2} & 3^{n-1} & \frac{3^{n-1}+1}{2}\\
\end{pmatrix},
\ee

and substituting into \eqref{eq:6-even}, we find that for any positive integer $n\geq 0$,

\be\label{eq:6-even}
\begin{pmatrix}
\ell_0(2n+2)\\ \ell_2(2n+2)\\\ell_4(2n+2)
\end{pmatrix}=
\begin{pmatrix}
\frac{3^{n}+1}{2}\\ 3^n\\ \frac{3^{n}-1}{2}\\
\end{pmatrix}.
\ee

The statement (3) follows immediately, and (4) follows by taking the sum of the squares of the dimensions
of the three simple modules for $Q_n(6)$.
\end{proof}

\subsection{Some speculation}\label{ss:spec} We conclude with some speculations on possible connections of our results with 
Virasoro algebras. Recall that the Virasoro algebra $\cL=\oplus_{i\in\Z}\C L_i\oplus \C C$ has irreducible highest weight modules
$L(c,h)$ {with highest weight $(c,h)$}, where $c,h(\in\C)$ are respectively the central charge and the eigenvalue of $L_0$. 
It was conjectured by Friedan, 
Qiu and Schenker \cite{FQS1} and proved by Langlands \cite{L} that $L(c,h)$ is unitarisable if and only if either
\begin{enumerate}
\item $c\geq 1$ and $h\geq 0$, or
\item there exist integers $m\geq 2${, $r$} and $s$ with $1\leq s\leq r<m$ such that
\[
c=1-\frac{6}{m(m+1)} \text{  and  }h=\frac{\left((m+1)r-ms\right)^2-1}{4m(m+1)}.
\]
\end{enumerate}
This result bears a superficial resemblance to Jones' result on the range of values of the index of a subfactor
{as} was mentioned in the preamble. Thus it might be expected that case (2) is somehow connected
with our algebras $Q_n(\ell)$ for $\ell=3,4,5,\dots$.

Further, there are several instances in the literature (see, e.g. {\cite{GS, KS, N}}) which hint at a connection between $Q_n(\ell)$
and the minimal unitary series of $\cL$ with central charge $c=1-\frac{6}{\ell(\ell-1)}$. Our work may provide some further
evidence along those lines.

For $\ell=3$, $c=0$, and there is just one irreducible representation, viz. the trivial one. This is `consistent' with $Q_n(3)={\C}$.
For $\ell=4$, $c=\frac{1}{2}$. This case is the Ising model, or equivalently, the $2$-state Potts model, as we have already observed. 

For $\ell=5$, which gives rise to the sequence of ``Fibonacci algebras'' $Q_n(5)$,
$c=\frac{7}{10}$, leading to interesting combinatorics involving the Rogers-Ramanujan identities. This is the 
``tricritical Ising model'', {which has $N=1$ supersymmetry}. 
For $\ell=6$, $c=\frac{4}{5}$, and we have the  
$3$-state Potts model {with $\Z_3$-parafermionic symmetry}.

We hope to return to this theme in a future work.


\begin{thebibliography}{9999}

\bibitem{ALZ} Henning H.  Andersen,  Gustav I. Lehrer, and Ruibin Zhang,  
``Cellularity of certain quantum endomorphism algebras'', {\sl Pacific J. Math. \bf 279} (2015), no. 1--2, 11--35.

\bibitem{CE} Connes, Alain and Evans, David E.,
``Embedding of $U(1)$-current algebras in noncommutative algebras of classical statistical mechanics'',
{\sl Comm. Math. Phys. \bf 121} (1989), no. 3, 507--525. 

\bibitem{FQS1} Friedan D., Qiu D. and Shenker S., ``Conformal Invariance, Unitarity 
and Critical Exponents in Two Dimensions'', {\sl Phys. Rev. Lett. \bf 52}
(1984), 1575--1578.


\bibitem{GS} Gainutdinov A. and Saleur H., ``Fusion and braiding in finite and affine
Temperley-Lie categories'', preprint, arXiv:1606.04530v1

\bibitem{DG}  Garling, D. J. H.,
``Clifford algebras: an introduction'', {\sl London Mathematical Society Student Texts, \bf 78} 
Cambridge University Press, Cambridge, 2011.

\bibitem{GL} Graham, J. J. and Lehrer, G. I. ``Cellular algebras'', {\em Invent. Math. \bf 123} (1996), no. 1, 1--34.

\bibitem{GLA}  Graham, J. J. and Lehrer, G. I., `` The representation theory of affine Temperley-Lieb algebras'',
{\sl Enseign. Math. (2) {\bf 44}} (1998), no. 3-4, 173--218.







\bibitem{J2} Jones, V. F. R.,  ``Index for subfactors'', {\sl Invent. Math. \bf 72} (1983), no. 1, 1--25. 

\bibitem{J1} Jones, V. F. R.,  ``Hecke algebra representations of braid groups and link polynomials'',
{\sl Ann. of Math. (2) \bf 126} (1987), no. 2, 335--388.

\bibitem{J3} Jones, Vaughan F. R., ``A polynomial invariant for knots via von Neumann algebras'',
{\sl Bull. Amer. Math. Soc. (N.S.) \bf 12} (1985), no. 1, 103--111.

\bibitem{JR}  Jones, Vaughan F. R. and Reznikoff, Sarah A., ``Hilbert space representations of the annular 
Temperley-Lieb algebra'', {\sl Pacific J. Math. \bf 228} (2006), no. 2, 219--249.

\bibitem{KS}  Koo, W. M. and Saleur, H, ``Representations of the Virasoro algebra from lattice models'',
{\sl Nuclear Phys. B \bf 426} (1994), no. 3, 459--504.

\bibitem{L} Langlands R., ``On unitary representations of the Virasoro algebra'',
in {\sl Infinite-dimensional Lie algebras and their applications} (Montreal
1986), World Sci. Publ., 1988, 141--159.

\bibitem{LZ5} G. I. Lehrer and R. B. Zhang, ``The Brauer Category and Invariant Theory",
{\sl J. Eur. Math. Soc. (JEMS) \bf 17} (2015), no. 9, 2311--2351.

\bibitem{MLW}  Martin, P. P., Launer, G. and Westbury, B. W., ``The Potts models and a generalisation of the Clifford algebras'',
{\sl Bull. London Math. Soc. \bf 21} (1989), no. 6, 544--550.

\bibitem{M1}  Morris, A. O., ``On a generalized Clifford algebra'', {\sl Quart. J. Math. Oxford Ser. (2) \bf 18} (1967), 7--12.


{
\bibitem{N} Nichols, A., ``The Temperley-Lieb algebra and its generalizations in
the Potts and $XXZ$ models, {\sl Jour. Stat. Mech.} (2006), 46 pages.}


\bibitem{R}  Reznikoff, Sarah A., ``Temperley-Lieb planar algebra modules 
arising from the ADE planar algebras'', {\sl J. Funct. Anal. \bf 228} (2005), no. 2, 445--468.

\bibitem{W}  Wenzl, Hans, ``On sequences of projections'' {\sl C. R. Math. Rep. Acad. Sci. Canada \bf 9}
 (1987), no. 1, 5--9.









\end{thebibliography}
\end{document}